\newcommand{\td}{\text{d}}
\theoremstyle{plain}
\newtheorem{theorem}{Theorem}[section]
\newtheorem{lemma}[theorem]{Lemma}
\newtheorem{prop}[theorem]{Proposition}
\newtheorem{conjecture}[theorem]{Conjecture}
\theoremstyle{definition}
\newtheorem{remark}[theorem]{Remark}
\newtheorem{definition}[theorem]{Definition}
\newenvironment{myproof}[2] {\paragraph{\it Proof of {#1} {#2}.}}{\hfill$\square$}
\def\be{\begin{equation}}
\def\ee{\end{equation}}
\def\bea{\begin{eqnarray}}
\def\eea{\end{eqnarray}}
\begin{document}
\title[Existence and uniqueness of asymptotically flat toric gravitational instantons]{Existence and uniqueness of asymptotically flat toric gravitational instantons}
\author{Hari K. Kunduri}
\address{Department of Mathematics and Statistics\\
		Memorial University of Newfoundland\\
		St John's NL A1C 4P5, Canada}
	\email{hkkunduri@mun.ca}
\author{James Lucietti}
\address{School of Mathematics and Maxwell Institute for Mathematical Sciences\\
		University of Edinburgh\\
		King's Buildings, Edinburgh, EH9 3JZ, United Kingdom}
	\email{j.lucietti@ed.ac.uk}


\thanks{H. K. Kunduri  and J. Lucietti acknowledge the support of the  NSERC Grant  RGPIN-2018-04887 and the  Leverhulme Trust  Research Project Grant RPG-2019-355 respectively.}

\begin{abstract}
We prove uniqueness and existence theorems for four-dimensional asymptotically flat, Ricci-flat,  gravitational instantons with a torus symmetry.  In particular, we prove that such instantons are uniquely characterised by their rod structure, which is data that encodes the fixed point sets of the torus action. Furthermore, we establish that for every admissible rod structure there exists an instanton that is smooth up to possible conical singularities at the axes of symmetry.
The proofs involve adapting the methods that are used to establish black hole uniqueness theorems, to a harmonic map formulation of Ricci-flat metrics with torus symmetry, where the target space is  directly related to the metric (rather than auxiliary potentials). 
We also give an elementary proof of the nonexistence of asymptotically flat toric half-flat instantons. Finally, we derive a general set of identities that relate asymptotic invariants such as the mass to the rod structure.
\end{abstract}

\maketitle

\section{Introduction}

A gravitational instanton is a four-dimensional complete Riemannian manifold $(M, \mathbf{g})$ that is a solution to the Einstein equations~\cite{Gibbons:1979xm}. The physical motivation for studying such geometries originated in the context of euclidean quantum gravity, where they arise as saddle points in the gravitational path integral~\cite{Gibbons:1976ue}.  In the vacuum theory, so the metric is Ricci-flat,  one is interested in non-compact geometries that decay to a flat metric near infinity.  Two notable classes are the so-called asymptotically locally euclidean (ALE) and asymptotically locally flat (ALF) instantons. ALE spaces  approach $\mathbb{R}^4/\Gamma$ where $\Gamma$ is a finite subgroup of $SU(2)$, whereas ALF spaces approach  a circle bundle over $\mathbb{R}^3$.

Most work in this direction has been in the context of complete hyper-K\"ahler metrics, which are equivalently characterised as half-flat or as having a (anti)-self dual Riemann tensor. These are automatically Ricci-flat and hence consist of a special subclass of gravitational instantons.   A full classification of  ALE complete hyper-K\"ahler manifolds has been achieved \cite{Kronheimer:1989zs}.  More recently, a classification of ALF complete hyper-K\"ahler manifolds of cyclic type has been obtained \cite{Minerbe:2009gj}. It appears that much less is known about the classification of the more generic class of Ricci flat, but not necessarily hyper-K\"ahler, gravitational instantons.

 In this paper we will initiate the classification of asymptotically flat (AF) and Ricci-flat gravitational instantons. We will say a gravitational instanton $(M,\mathbf{g})$   is AF if it approaches a quotient of  $\mathbb{R} \times \mathbb{R}^3$ with the flat metric, under an isometry that acts as a translation in $\mathbb{R}$ and a rotation in  $\mathbb{R}^3$, where the metric on the $\mathbb{R}$ factor remains finite at infinity (we will define this precisely in Section \ref{sec:AF}). This class includes the notable example of the euclidean Schwarzschild metrics on $\mathbb{R}^2\times S^2$,
\be\label{Sch}
\mathbf{g}= \left(1-\frac{2m}{r} \right) \td \tau^2+ \frac{\td r^2}{1-\frac{2m}{r}} + r^2 (\td \theta^2+\sin^2\theta \td \phi^2) \; ,
\ee
where the parameter $m>0$ and the euclidean time $\tau\sim \tau + \beta$ must be periodically identified with period $\beta=8\pi m$ in order to obtain a complete metric, resulting in an asymptotically $S^1\times \mathbb{R}^3$ space. More generally, this class also includes the 2-parameter euclidean Kerr solution on $\mathbb{R}^2 \times S^2$ where now the identification is tilted $(\tau, \phi)\sim (\tau+ \beta, \phi+ \beta \Omega)$, with $\Omega$  related to the second parameter (the Schwarzschild solution is the $\Omega=0$ special case).   Note that our definition of AF overlaps with, but does not contain, the related definition of ALF manifolds as given by Minerbe~\cite{Minerbe}.  In particular, for the Kerr instanton, the vector field $\partial_\tau$ with asymptotically finite norm does \emph{not} have closed orbits; instead $\partial_\tau+ \Omega \partial_\phi$ is tangent to a circle with unbounded length at infinity.

For the ALF class of gravitational instantons,  Minerbe has defined a geometric invariant akin to the mass and  established a corresponding positive mass theorem~\cite{Minerbe} (more generally, this result holds whenever the Ricci tensor is non-negative, or $M$ is spin and the scalar curvature is non-negative - see also \cite{LSZ} for recent related work on ALF and ALG spaces).  The parameter $m$ appearing in \eqref{Sch} is precisely this geometric invariant. For AF instantons where the circle at infinity does not have finite length, such as the Kerr instanton,  there is as yet no analogous definition of a mass. In our analysis of AF gravitational instantons, we will identify an invariant in the asymptotic expansion around infinity that appears to capture the notion of a mass, although we do not address its positivity here. 

In the context of Lorentzian manifolds, the celebrated no-hair theorem states that the only asymptotically flat, stationary, vacuum spacetime that contains a black hole region, must belong to the Kerr family of solutions, for a review see~\cite{Chrusciel:2012jk}. Naturally, it was conjectured that this result would also hold in the Riemannian case, i.e., that the euclidean Kerr solution is the only AF gravitational instanton~\cite{Gibbons:1979xm}.  This question was investigated by Lapedes~\cite{Lapedes:1980st}, who showed that Israel's theorem \cite{Israel} -- which establishes that the Schwarzschild metric is the unique static black hole solution -- remains valid in Riemannian signature. However, Lapedes also pointed out that the uniqueness proof for Kerr black holes in the class of stationary and axisymmetric solutions is no longer valid in the Riemannian setting. Nevertheless, a Riemannian no-hair theorem  was still conjectured in that work~\cite{Lapedes:1980st}.

Remarkably, over 30 years later, Chen and Teo~\cite{Chen:2011tc, Chen:2015vva}, constructed a counterexample to the Riemannian no-hair conjecture. They found an explicit 2-parameter family of AF complete Ricci-flat metrics on $\mathbb{CP}^2 \backslash S^1$. The metrics are much more complicated than the Kerr solution (see Appendix \ref{app:explicit}), although both solutions do have a torus symmetry. 
Thus a natural problem presents itself: obtain a complete classification of AF gravitational instantons. In the absence of any extra structure this problem is at present out of reach. However, for solutions with a torus symmetry as above, the question is much more tractable. 

In this paper we will consider the classification of AF {\it toric} gravitational instantons, i.e.,  AF instantons with a torus symmetry (see Definition \ref{def:toric}).  In this case the notion of {\it rod structure}  can be defined for such instantons~\cite{Chen:2010zu} (we define this precisely in Section \ref{sec:axis}).  This is data that encodes how the torus action degenerates on the axes and determines the topology of the instanton, in particular, it fixes the 2-cycle structure and their (minimal) area. The Kerr and Chen-Teo metrics possess different rod structures so this offers a way to distinguish them. Indeed, our first main result shows that the rod structure provides a way to classify toric instantons.

\begin{theorem}
\label{th:unique}
There is at most one  AF toric gravitational instanton with a given rod structure.
\end{theorem}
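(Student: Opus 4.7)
The plan is to adapt the Mazur-type uniqueness argument, well-known from the stationary black hole setting, to Ricci-flat Riemannian metrics with $T^2$ symmetry. The first step is to set up global Weyl--Papapetrou coordinates on the orbit space $M/T^2$. Since both Killing vectors $\xi_{(i)}$ generate $U(1)$ actions, their Gram matrix $G_{ij} := \mathbf{g}(\xi_{(i)}, \xi_{(j)})$ is positive-definite wherever the action is free, and orthogonal transitivity (automatic for two commuting Killing fields in a $4$-dimensional Ricci-flat geometry) allows one to choose harmonic coordinates $(\rho,z)$ with $\rho^2 = \det G$ and put the metric in the form
\be
\mathbf{g} \;=\; e^{2\nu}(\td\rho^2 + \td z^2) + G_{ij}(\rho,z)\, \td\phi^i \td\phi^j,
\ee
where $\phi^i$ parametrise $T^2$ and the full geometry is encoded in $(G,\nu)$ on the closed half-plane $\{\rho\geq 0\}$. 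The vacuum Einstein equations then decouple into a harmonic map equation for the unit-determinant matrix $\tilde{G} := \rho^{-1}G$, viewed as a map from $\{\rho>0\}$ into the Riemannian symmetric space $SL(2,\mathbb{R})/SO(2)$, together with a first-order system that determines $\nu$ from $\tilde{G}$ up to a constant. Crucially, no twist potentials are introduced: the target of the harmonic map is literally the torus metric, which will be essential when controlling the boundary behaviour.

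Given two AF toric instantons $(M_a,\mathbf{g}_a)$, $a=1,2$, with the same rod structure, I would then form the Mazur-type quantity
\be
\sigma \;:=\; \tr\!\bigl(\tilde{G}_2 \tilde{G}_1^{-1}\bigr) - 2 \;\geq\; 0,
\ee
which vanishes pointwise iff $\tilde{G}_1 = \tilde{G}_2$, and appeal to the standard identity for harmonic maps into $SL(2,\mathbb{R})/SO(2)$ yielding $\mathrm{div}(\rho\,\nabla\sigma) \geq 0$ on the open half-plane. Uniqueness then reduces, via the maximum principle, to showing $\sigma \to 0$ on the entire boundary of the half-plane: along the interior of each axis rod, across each corner where adjacent rods meet, and at infinity.

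The bulk of the work lies in this boundary analysis. On the interior of a rod the common rod vector lies in the kernel of $G_1$ and $G_2$ to leading order, and the smoothness of each $\mathbf{g}_a$ at the associated fixed-point set pins the subleading behaviour of $\tilde{G}_a$ down sufficiently for $\sigma$ to extend continuously to zero there. At infinity the AF assumption (made precise in Section \ref{sec:AF}) fixes the leading behaviour of $\tilde{G}_a$, forcing $\sigma \to 0$ uniformly as $(\rho,z) \to \infty$. Once $\tilde{G}_1 \equiv \tilde{G}_2$ is established, integrating the first-order system for $\nu$ with a common AF normalisation gives $\nu_1 = \nu_2$ and hence $\mathbf{g}_1 = \mathbf{g}_2$.

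The principal obstacle will be the analysis at the corners where two adjacent rods meet. There both Killing vectors degenerate simultaneously, the matrix $\tilde{G}$ becomes singular, and $\sigma$ need not extend continuously. I would mimic the device from the black hole uniqueness proofs: pass to local coordinates adapted to a basis of rod vectors for the two adjacent rods, in which the ambient metrics $\mathbf{g}_a$ are smooth, deduce that $\sigma$ remains bounded in a neighbourhood of the corner, and then integrate the divergence identity against a cut-off function, showing that the boundary contributions from small punctured neighbourhoods of the corners vanish in the limit. Implementing this step cleanly in the present Riemannian setting, without the simplifications afforded by twist potentials, is precisely where the harmonic map formulation with the torus metric itself as target becomes essential.
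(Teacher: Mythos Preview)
Your overall strategy---take the Gram matrix itself (not a twist-potential reformulation) as a harmonic map into $SL(2,\mathbb{R})/SO(2)$ and run a Mazur argument---is exactly what the paper does. But there is a genuine gap in your boundary analysis on the axis.

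You assert that on the interior of each rod ``$\sigma$ extends continuously to zero''. This is false. In a basis $(u_A,v_A)$ adapted to $I_A$ one may write both Gram matrices as
\[
G_a \;=\; L\begin{pmatrix} h_a+\rho^2 h_a^{-1}w_a^2 & \rho^2 h_a^{-1}w_a \\ \rho^2 h_a^{-1}w_a & \rho^2 h_a^{-1}\end{pmatrix}L^T,\qquad a=1,2,
\]
with $h_a>0$ and $w_a$ smooth in $(\rho^2,z)$, and a direct computation gives
\[
\sigma \;=\; \tr(G_2 G_1^{-1})-2 \;=\; \frac{(h_1-h_2)^2+\rho^2(w_1-w_2)^2}{h_1 h_2}\,.
\]
At $\rho=0$ this equals $(h_1-h_2)^2/(h_1 h_2)$, which has no reason to vanish: the rod structure fixes only the \emph{kernel direction} of $G_a|_{\rho=0}$, not the norm $h_a$ of the surviving Killing field along the rod. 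So $\sigma$ is generically strictly positive on every axis rod, and the maximum-principle step with boundary data $\sigma|_{\partial}=0$ is unavailable.

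The paper circumvents this by using the integrated Mazur identity in flux form,
\[
\int_{\partial\hat M}\rho\,\partial_n\sigma \;=\; \int_{\hat M}\rho\,\tr(N^TN)\;\ge 0,
\]
and showing that the \emph{flux} $\rho\,\partial_\rho\sigma$ (not $\sigma$ itself) vanishes on each rod; this is immediate from the formula above since $h_a,w_a$ are smooth in $\rho^2$. At infinity $\partial_r\sigma=O(r^{-3})$ suffices, without fixing $m$ or $j$. The right-hand side then vanishes, forcing $G_1\equiv G_2$. With this correction your worry about the corners largely evaporates: they are isolated points on a one-dimensional boundary and contribute nothing to the flux integral, so the cut-off machinery you outline is unnecessary.
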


We prove this by adapting the methods used for the uniqueness theorems for stationary vacuum black hole spacetimes with $D-3$ commuting axial symmetries, originally developed by Mazur in four dimensions~\cite{Mazur:1984wz, Hollands:2007aj,Hollands:2008fm}. Indeed, the notion of rod structure also plays a key role in the classification of such black holes, especially in higher dimensions. A key step in establishing the classic black hole uniqueness theorem is to show that the vacuum Einstein equations for stationary and axisymmetric spacetimes can be reformulated, in terms of an auxiliary `twist' potential, as a harmonic map with a Riemannian symmetric space target $SL(2, \mathbb{R})/SO(2)$. In the  case of Ricci-flat Riemannian manifolds with torus symmetry, one can repeat this reformulation in terms of twist potentials resulting in a harmonic map with a Lorentzian symmetric space target $SL(2, \mathbb{R})/SO(1,1)$; it is the indefinite nature of the target space metric which spoils the  uniqueness argument (this is essentially Lapedes's aforementioned observation concerning the failure of the uniqueness proof).  Nevertheless, we show that the method of Mazur can still be applied in the Riemannian case, albeit in a different way. In particular, we show that for Riemannian manifolds the Ricci-flat condition for toric metrics reduces {\it directly} to a harmonic map with Riemannian target $SL(2, \mathbb{R})/SO(2)$, without the need to introduce twist potentials. This then allows one to use the Mazur argument.\footnote{Such a direct reduction of stationary and axisymmetric vacuum solutions results in a harmonic map with Lorentzian target $SL(2, \mathbb{R})/SO(1,1)$, which cannot be used to prove uniqueness.}

Furthermore, using this harmonic map formulation we are also able to exploit previous results that allow one to address existence of solutions.  This leads to our second main result.

\begin{theorem}
\label{th:exist}
There exists a unique AF toric gravitational instanton for every admissible rod structure, that is smooth everywhere away from the axes.
\end{theorem}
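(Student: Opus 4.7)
My strategy is to reverse the reduction that underlies Theorem \ref{th:unique}. An admissible rod structure is used to prescribe boundary data for a harmonic map into $SL(2,\mathbb{R})/SO(2)$, from which a toric AF Ricci-flat metric will then be reconstructed. Concretely, in Weyl--Papapetrou-type coordinates $(\rho,z)$ on the two-dimensional orbit space, the Ricci-flat equations for toric metrics decouple, as explained in the introduction, into (a) a harmonic map equation for the $2\times 2$ Gram matrix $\Phi(\rho,z)$ of the torus Killing vectors, interpreted as a rotationally invariant map $\mathbb{R}^{3}\setminus A\to SL(2,\mathbb{R})/SO(2)$, and (b) a pair of first-order equations determining the orbit-space conformal factor $e^{2\nu}(\rho,z)$, whose integrability is automatic once $\Phi$ is harmonic. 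The problem thus reduces to constructing $\Phi$ with the correct singular behaviour on the axis $A$ and the correct asymptotics at infinity.

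For each admissible rod structure, one can write down an explicit reference map $\Phi_{0}(\rho,z)$ that carries the prescribed logarithmic blow-ups on each rod, with leading coefficients dictated by the rod vectors, and the flat asymptotic profile determined by the two semi-infinite rods. I would then invoke the existence theorem for axisymmetric harmonic maps from $\mathbb{R}^{3}$ into hyperbolic space $\mathbb{H}^{2}=SL(2,\mathbb{R})/SO(2)$ with prescribed singularities along the axis, originally due to Weinstein and subsequently refined in the stationary axisymmetric black hole uniqueness and existence program. In that framework the renormalised energy of $\Phi$ relative to $\Phi_{0}$ is finite, and its minimiser provides a smooth harmonic map asymptotic to $\Phi_{0}$ on $A$ and at infinity. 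It is essential here that our direct reduction produces a \emph{Riemannian} target, which is precisely the setting covered by this existence theory; this is the same structural feature that makes the Mazur argument work in Theorem \ref{th:unique}.

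With $\Phi$ in hand, the closed one-form defining $\nu$ is integrated to yield $e^{2\nu}$ on the orbit space, with the multiplicative constant fixed by demanding $e^{2\nu}\to 1$ at infinity. Away from the axis the metric so defined is real-analytic by standard elliptic regularity. Near an interior point of a rod, the local form of $\Phi_{0}$ ensures that the associated rod vector degenerates as a rotational Killing field of the transverse $\mathbb{R}^{2}$ factor, giving a smooth metric up to at most a conical singularity, as claimed in the statement. At infinity, comparing the expansion of $\Phi$ with the flat model dictated by the semi-infinite rods -- with the tilt parameter $\Omega$ read off from those rod vectors -- verifies the AF condition of Section \ref{sec:AF}. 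Uniqueness for the given rod structure is then Theorem \ref{th:unique}.

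The main technical obstacle will be the asymptotic analysis. The Weinstein-type existence theorem yields convergence $\Phi\to\Phi_{0}$ at infinity, but upgrading this to the sharp asymptotic expansion needed to identify the metric as AF -- rather than merely ALF or ALE -- and to extract a well-defined mass-like invariant of the type mentioned in the introduction requires a careful matched expansion of both $\Phi$ and $\nu$. Equally, the corner conditions where consecutive rods meet must be checked to ensure that the admissibility hypothesis is compatible with the resulting behaviour of $e^{2\nu}$, so that no spurious singularity is introduced at rod intersections. These verifications are familiar from the Lorentzian black hole literature but need to be reworked in the Riemannian signature and against the AF asymptotic model defined in Section \ref{sec:AF}.
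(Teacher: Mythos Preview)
Your proposal is correct and follows essentially the same route as the paper: reduce to an axisymmetric harmonic map $\Phi:\mathbb{R}^3\setminus\Gamma\to SL(2,\mathbb{R})/SO(2)$, build an explicit reference map $\Phi_0$ encoding the prescribed rod structure and AF asymptotics, invoke Weinstein's existence theorem to produce a harmonic map asymptotic to $\Phi_0$, then check the resulting $\Phi$ inherits the rod structure and AF behaviour before integrating up $\nu$. The only mild difference is packaging: you phrase Weinstein's input as finiteness of a renormalised energy and minimisation, whereas the paper uses the formulation in which the hypothesis on $\Phi_0$ is that its tension $|\tau(\Phi_0)|$ be bounded and decay at infinity, and accordingly spends its effort on an explicit piecewise construction of $\Phi_0$ (harmonic on each region $\mathcal{S}_A$, with carefully interpolated transition regions $\mathcal{T}_A$) together with a direct check that the tension is bounded there; the paper also isolates as a separate proposition the verification, via boundedness of the Mazur quantity $\text{Tr}(\Phi\Phi_0^{-1}-I)$, that the harmonic map has the same kernel on each rod as $\Phi_0$. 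Your identified ``main technical obstacle'' --- upgrading $\Phi\to\Phi_0$ at infinity to the sharp AF decay of Definition~\ref{def:AF} --- is exactly the point the paper leaves unresolved as well (see the Remark following Proposition~\ref{prop:asymp}).
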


For this we exploit a  theorem by Weinstein concerning the existence of harmonic maps with prescribed singularities~\cite{wein, wein2, Weinstein:2019zrh}. In this context the prescribed singularities correspond to the boundary conditions required in order to obtain a metric that is smooth at the axes.  Weinstein's theorem has been applied in the context of four and five-dimensional stationary and axisymmetric black hole spacetimes~\cite{wein, wein2, Khuri:2017xsc}, however it only depends on the character of the harmonic map, and therefore can be applied in our case. In particular, the existence proof reduces to exhibiting a so-called model map, which is a map that obeys the same boundary condition but need not be harmonic (although must have bounded tension).  Just like in the black hole case, this existence result cannot address regularity of the metric at the axes. In particular, generically one can have conical singularities on the axis components, and therefore we emphasise that our theorems do not address the classification of smooth AF toric instantons. Indeed, the Chen-Teo instanton shows that there are smooth instantons for rod structures distinct to the Kerr instanton, so it would appear reasonable to expect that other smooth instantons also exist.  In fact, given the rod structure can be arbitrarily complicated, in principle it is possible that there exists an infinite class of new smooth AF instantons. We will not address this interesting question in this paper. As in the case for four and five dimensional stationary and axisymmetric black hole spacetimes, we expect that methods from integrability theory can be employed to construct the general solution on the axes for any rod structure, and hence determine the (non)existence of metrics that are regular at the axes~\cite{Neugebauer:2011qb, Lucietti:2020ltw, Lucietti:2020phh}.

As mentioned above, a notable subclass of Ricci-flat gravitational instantons are (anti)-self dual. 
In fact, it has been argued by Lapedes that there are no AF half-flat instantons~\cite{Lapedes:1980st}. The argument, originally formulated by Gibbons and Pope in the context of asymptotically euclidean instantons, uses an index theorem for the Dirac operator~\cite{Gibbons:1979xn}.  We will give an elementary proof of this result in the context of toric instantons, thus establishing the following.

\begin{theorem}
\label{th:HF}
There are no  AF toric half-flat instantons.
\end{theorem}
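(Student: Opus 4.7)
The plan is to derive a contradiction from the rigidity of four-dimensional hyper-K\"ahler geometry. In dimension four, a Ricci-flat half-flat metric is hyper-K\"ahler, so there exist three parallel self-dual $2$-forms $\omega_1,\omega_2,\omega_3$ spanning the $3$-dimensional space of parallel self-dual $2$-forms. Let $K_1,K_2$ generate the torus symmetry, with $K_1 \to \partial_\tau$ the finite-norm asymptotic translation and $K_2$ carrying the asymptotic rotation in $\mathbb{R}^3$.

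My first step would be to pin down the possible AF asymptotic identifications. By hypothesis the asymptotic region is modelled on the quotient of $\mathbb{R}\times\mathbb{R}^3$ by a screw $(\tau,\vec{x})\mapsto(\tau+\beta,R\vec{x})$ with $R\in SO(3)$. The globally defined parallel forms $\omega_i$ must restrict to parallel forms on this asymptotic quotient, equivalently to $R$-invariant parallel $2$-forms on the universal cover $\mathbb{R}^4$. A direct calculation shows that a non-trivial rotation of $\mathbb{R}^3$ fixes only the one-dimensional subspace of self-dual $2$-forms aligned with its axis, while rotating the complementary two-plane non-trivially. Requiring all three $\omega_i$ to descend therefore forces $R$ to be trivial, so the asymptote must be the untwisted $S^1_\tau\times\mathbb{R}^3$.

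Next I would show that $K_1$ is globally tri-holomorphic. Since $\omega_i$ is parallel and $K_1$ is Killing, $\nabla$ and $\mathcal{L}_{K_1}$ commute, so $\mathcal{L}_{K_1}\omega_i$ is again parallel and hence a constant linear combination of the $\omega_j$. Evaluating at infinity, where translations in $\tau$ preserve all three standard constant self-dual forms, gives $\mathcal{L}_{K_1}\omega_i = 0$. Thus $K_1$ generates a tri-holomorphic $S^1$-action and the Gibbons-Hawking ansatz applies:
\[
\mathbf{g} = V(\td\vec{x})^2 + V^{-1}(\td\tau + A)^2,\quad V = V_\infty + \sum_i\frac{n_i}{|\vec{x}-\vec{p}_i|},\quad \td A = \star_3\,\td V.
\]
Since the asymptote is the untwisted $S^1_\tau\times\mathbb{R}^3$, the asymptotic circle bundle over $S^2_\infty$ is trivial, forcing $\sum_i n_i = 0$. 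Riemannian signature requires $V>0$ globally, hence each $n_i\ge 0$; the only possibility is that every $n_i$ vanishes, there are no NUT centres, $V$ is constant, and $\mathbf{g}$ is flat---contradicting the non-triviality required of a gravitational instanton.

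The main obstacle I anticipate is the descent step of the first paragraph: one must justify passage from the pointwise convergence of $\mathbf{g}$ to the flat asymptotic metric to a corresponding convergence of the parallel forms $\omega_i$, with enough control to identify the asymptotic values unambiguously as lifts to the universal cover and to conclude $R$-invariance. Once this is in place, the rest of the argument reduces to the standard Gibbons-Hawking structure theorem combined with positivity of the harmonic potential $V$.
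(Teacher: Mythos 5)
Your argument is correct in outline and reaches the same structural endpoint as the paper --- a Gibbons--Hawking potential whose pole residues must be positive but whose total flux at infinity must vanish --- but it gets there by a genuinely different route. You use the parallel (anti-)self-dual triple to show that the deck rotation is trivial and that $\partial_\tau$ is tri-holomorphic, and then invoke the standard multi-centre form of the Gibbons--Hawking ansatz with $\partial_\tau$ as the fibre; the paper instead derives the Gibbons--Hawking form for \emph{any} toric half-flat metric via the toric K\"ahler (Abreu) potential, identifies the fibre as $\partial_\tau$ from boundedness of $g_{\tau\tau}$, forces the monopole moment $m=0$ from the decay of $g_{\tau\phi}$ (which is the same mechanism as your ``trivial circle bundle at infinity'', and in fact works without first showing $\Omega=0$), and then localises the poles of $H$ at the corners of the orbit space with residue $\hat{\beta}$ fixed explicitly by the bolt-smoothness condition (\ref{smoothcorner}), giving $m=N\hat{\beta}/2>0$. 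What your route buys is independence from the toric/rod-structure bookkeeping --- it applies to any AF hyper-K\"ahler manifold carrying a tri-holomorphic circle action with a fixed point --- at the cost of the analytic step you correctly flag (uniform convergence of the parallel forms at infinity, which does follow from the $O_2(r^{-1})$ decay since the Christoffel symbols are then integrable along radial rays) plus the standard but unstated global facts that the Gibbons--Hawking base is all of $\mathbb{R}^3$, that the nuts are isolated and finite in number, and that $n_i\ge 0$ follows from $V>0$ by B\^ocher's theorem. One point to sharpen: your closing contradiction should not appeal vaguely to ``non-triviality'' but to Definition \ref{def:toric}, which requires a fixed point of the torus action; at such a point $\partial_\tau$ vanishes, so there is at least one nut with $n_i>0$, directly contradicting $\sum_i n_i=0$ (equivalently, no nuts leaves $S^1\times\mathbb{R}^3$, which is neither simply connected nor has a torus fixed point).
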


The proof of this  first uses the well known fact that any toric half-flat metric is a Gibbons-Hawking metric. Then a global analysis of these geometries, which depend on a single axisymmetric harmonic function on $\mathbb{R}^3$,  shows that smoothness of the axes is incompatible with asymptotic flatness.

Our final result is a general set of identities for AF toric gravitational instantons, that relate asymptotic invariants such as the mass, to invariants defined by the rod structure, see Theorem \ref{th:id} for the precise statement. These can be thought of as Riemannian analogues of certain thermodynamic identities that hold for black holes~\cite{BCH}.  Indeed, the derivation of these identities, which involves integrating the harmonic map equation over the orbit space, was inspired by an analogous set of identities recently obtained for five-dimensional black holes~\cite{Kunduri:2018qqt}.

The organisation of this article is as follows. In Section \ref{sec:toric} we give the reduction of the Einstein equations for toric gravitational instantons, derive the general leading asymptotic expansion for AF instantons,  introduce the notion of rod structure and derive the geometry of the axis, and prove Theorem \ref{th:HF} on the nonexistence of the half-flat instantons. In Section \ref{sec:UEtheorems} we present our main uniqueness and existence results, in particular we prove Theorems \ref{th:unique} and \ref{th:exist}. In Section \ref{sec:id} we derive a general set of identities for AF toric instantons, summarised in Theorem \ref{th:id}.  We relegate some of the curvature calculations to Appendix \ref{app:curvature} and verify our identities for the known instantons in Appendix \ref{app:explicit}.

\subsection*{Acknowledgements} We thank Gary Gibbons, Marcus Khuri and Stefan Hollands for useful comments and discussions.

\section{Toric gravitational instantons}
\label{sec:toric}

\subsection{Reduction of Einstein equations} We begin by defining the class of instantons that we will be concerned with in this paper.

\begin{definition}
\label{def:toric}
A {\it toric gravitational instanton} is a four-dimensional, simply-connected, Ricci-flat, complete Riemannian manifold $(M,\mathbf{g})$, with an isometric effective torus  $T \cong U(1)^2$ action with at least one fixed point. 
\end{definition}

Now, for simplicity we will make the following technical assumption throughout this paper:  the $T$-action has no points in $M$ with a discrete isotropy subgroup.  It then follows, under our stated assumptions, that the orbit space of $M$ under such a torus action, which we denote by $\hat{M} := M/T$,  is a 2-dimensional simply-connected manifold with boundaries and corners~\cite[Proposition 1]{Hollands:2007aj} (see also the classic work~\cite[Theorem 1.12]{OR}).  We expect that the assumption that there are no discrete isotropy groups can be justified from asymptotic flatness as in the case of stationary and axisymmetric spacetimes~\cite{Hollands:2008fm}, although we will not pursue this here.

Let $\eta_i$, $i=1,2$ denote the Killing vector fields generating a torus action and define the $2\times 2$ Gram matrix $g$ of Killing fields by 
\be
g_{ij}:= \mathbf{g} (\eta_i, \eta_j).
\ee
Then, the interior of $\hat{M}$ corresponds to the points where $g_{ij}$ is rank-2, boundaries of $\hat{M}$ occur precisely at points where $g_{ij}$ is rank-1, and the corners where $g_{ij}$ is rank-0.

Now, consider the functions $\omega_i:= \star (\eta_1 \wedge \eta_2 \wedge \td \eta_i)$, 
where $\star$ is the Hodge dual with respect to $\mathbf{g}$, which must be constant by Ricci-flatness of $(M,\mathbf{g})$. Define the axis set
\be
\mathcal{A}:= \{ p\in M\, | \,  \det g(p) =0 \}  \; , 
\ee
which corresponds to the boundaries and corners of $\hat{M}$. Now, since the $T$-action has a fixed point the axis is non-empty and hence the constants $\omega_i=0$ for $i=1,2$. We deduce, by Frobenius' integrability theorem, that the distribution orthogonal to $\text{span}(\eta_1, \eta_2) \subset TM$ is integrable at every point.  Thus, on $M \backslash \mathcal{A}$ we can introduce coordinates $\phi^i$, with  $i=1,2$,
such that $\eta_i= \partial_{\phi^i}$ in terms of which the metric takes the block-diagonal form
\be
\mathbf{g}= g_{ij} \td \phi^i \td \phi^j+ \hat{g}   \; ,   \label{metric}
\ee
where $\hat{g}$ is a Riemannian metric on the orthogonal 2-dimensional surfaces which can be identified with the interior of $\hat{M}$. The Gram matrix $g_{ij}$ can then be thought of a matrix of functions on $\hat{M}$. 

Now, Ricci-flatness of $(M,\mathbf{g})$  is equivalent to the following equations on the orbit space $(\hat{M},\hat{g})$:
\bea
&&\td \hat{\star}( \rho J)=0 \; , \label{geq} \\
&&\hat{R}_{ab} = \hat{\nabla}_a \hat{\nabla}_b \log \rho + \frac{1}{4} \text{Tr} \left( J_a  J_b \right) \; ,\label{2dRic}
\eea
where $\hat{\star}$ is the Hodge dual, $\hat{\nabla}$ the metric connection and $\hat{R}_{ab}$ the Ricci tensor, all with respect to $\hat{g}$,  and 
\be 
J:= g^{-1} \td g , \qquad \rho:= \sqrt{ \det g } \; ,   \label{defs}
\ee
are a matrix valued 1-form  and non-negative function on $\hat{M}$, respectively.  It is worth noting that $J$  satisfies the zero curvature condition
\be
\td J+ J \wedge J=0  \; . \label{zerocurvature}
\ee
This reduction is well-known in the case of stationary and axisymmetric vacuum spacetimes; we provide our own self-contained derivation for the Riemannian case in  Appendix \ref{app:curvature}.

The trace of (\ref{geq}) implies $\rho$ is a harmonic function on $\hat{M}$. Therefore, we can introduce its harmonic conjugate $z$ via $\td z= -\hat{\star} \td \rho$,  so that $\td \rho\wedge \td z$ is positive orientation. It follows that wherever $\td \rho \neq 0$ we can use $(\rho, z)$ as a chart on the orbit space, so that
\be
\hat{g} = e^{2\nu} (\td \rho^2+ \td z^2)  \; ,   \label{orbit}
\ee
where $\nu$ is a function of $(\rho, z)$. 
In fact, one can show that this provides a global chart on the interior of $\hat{M}$. The proof of this, which uses asymptotic flatness, will be outlined in Section \ref{sec:AF} and is essentially the same as for stationary and axisymmetric vacuum spacetimes~\cite{wein, Chrusciel:2008js, Hollands:2008fm}.
In these coordinates (\ref{2dRic}) reduces to the first order equations for $\nu$,
\be
\partial_z \nu = \frac{\rho}{4} \text{Tr} ( J_z J_\rho), \qquad \partial_\rho \nu = -\frac{1}{2\rho}+ \frac{\rho}{8} \text{Tr}( J_\rho^2- J_z^2)  \; .   \label{dnu}
\ee
The integrability condition for $\nu$ is equivalent to (\ref{geq}). To see this it is useful to note that (\ref{geq}) and (\ref{zerocurvature}) in Weyl coordinates reduce to 
\bea
&&\partial_\rho(\rho J_\rho)+ \partial_z(\rho J_z)=0  \; ,  \label{geqweyl} \\
&& \partial_\rho J_z - \partial_z J_\rho+ [J_\rho, J_z]=0  \; .
\eea
Thus the Einstein equations for this class of metrics reduce to solving (\ref{geq}) or simply (\ref{geqweyl}).
It is worth noting that (\ref{geqweyl}) and the first order equations for $\nu$ (\ref{dnu}) take the same form as in the Lorentzian case of stationary and axisymmetric solutions (where $g_{ij}$ is Lorentzian instead of Riemannian as here).

\subsection{Asymptotic flatness}
\label{sec:AF}

We will now define a suitably general notion of asymptotically flatness that captures the gravitational instantons that we are interested in.

Consider the flat Riemannian manifold  $(M_{\flat}, \mathbf{g}_{0})$ defined as the quotient   $M_\flat :=[\mathbb{R} \times(\mathbb{R}^3\backslash B_R)]/\mathbb{Z}$, where $B_R\subset \mathbb{R}^3$ is a 3-ball of euclidean radius $R>0$ and $\mathbb{Z}$ is generated by the automorphism $(\tau, r, \theta, \phi) \mapsto (\tau + \beta, r,  \theta, \phi+ \beta\Omega)$,  $\beta>0, \Omega$ are constants,
\begin{equation}\label{model}
\mathbf{g}_0 = \td \tau^2 + \td r^2 + r^2 (\td \theta^2 + \sin^2\theta \td \phi^2) \; ,
\end{equation}
where $\tau \in \mathbb{R}$ and $(r, \theta ,\phi)$ are  standard spherical coordinates on $\mathbb{R}^3$. We will consider toric gravitational instantons that are modelled on $(M_{\flat}, \mathbf{g}_{0})$ near infinity.    This  of course includes the Schwarzschild solution for $\Omega=0$ (see (\ref{Sch})) and the Kerr solution for $\Omega \neq 0$ (see Appendix \ref{app:explicit}).   Observe that both $\frac{\partial}{\partial \tau}$ and $\frac{\partial}{\partial \phi}$ are Killing vector fields on $(M_{\flat}, \mathbf{g}_0)$, and in particular $\frac{\partial}{\partial \tau}$ has bounded norm. We can think of these Killing vectors as euclidean analogues of time translation and rotation respectively.    

Before giving our precise definition of asymptotic flatness, it is helpful to note that the Gram matrix of Killing fields for $\mathbf{g}_0$, in the basis $(\partial_\tau, \partial_\phi)$, is simply
\be
g_0 = \left( \begin{array}{cc} 1 & 0 \\ 0 & r^2 \sin^2\theta \end{array} \right)   \label{g0}
\ee
and  therefore the $(\rho,z)$ coordinates for this flat geometry  are 
\be
\rho = r \sin \theta, \qquad z= r \cos \theta   \; .   \label{polar}
\ee
with $\nu=0$.  It is convenient to use such polar coordinates $(r,\theta)$ to develop the asymptotic expansion for any AF metric.

\begin{definition}
\label{def:AF}
An \emph{asymptotically flat (AF) toric gravitational instanton}  is a toric gravitational instanton  with an end diffeomorphic to  $M_\flat$ for  some choice of $\beta$ and $\Omega$. Furthermore, on the end 
\be
\mathbf{g}= V (\td \tau+ \omega \td \phi)^2+ V^{-1} r^2\sin^2\theta \td \phi^2+ e^{2\nu} (\td r^2+ r^2 \td \theta^2) \; ,  \label{AFmetric}
\ee
where the coordinates $(\tau, r, \theta, \phi)$ are defined by the diffeomorphism, $\partial_\tau, \partial_\phi$ are the Killing fields that generate the torus action,  and
 the metric functions
\be
V= 1+ O_2(r^{-1}), \qquad \omega= O_2(r^{-1}), \qquad \nu= O_2(r^{-1})  \; ,
\ee
as $r \to \infty$ (here $f=O_n(r^{-1})$ if in cartesian coordinates $\partial^k f= O(r^{-1-k})$ for $k \leq n$).
\end{definition}

This definition ensures the metric $\mathbf{g}$ approaches $\mathbf{g}_0$ at infinity.
Now, observe that Definition \ref{def:AF} implies that relative to the basis $(\partial_\tau, \partial_
\phi)$ the function $\rho = r \sin\theta$, which gives (up to an additive constant) $z= r\cos\theta$, i.e. $(\rho, z)$ are related to the coordinates in the AF end by (\ref{polar}) for any AF instanton (not just flat space).  It follows that $(\rho, z)$ are well-defined coordinates in the AF end for any AF instanton.  This can be used to show  the much stronger statement that $(\rho, z)$ is  a global chart on the interior of the orbit space. The argument given in~\cite{wein} applies essentially unchanged, which we repeat now for completeness.
  
For an AF metric the orbit space metric must approach that of $\mathbf{g}_0$, i.e. as $r\to \infty$ we  have
\be
\hat{g} =(1+ O_2(r^{-1})) \left(\td r^2+ r^2 \td \theta^2 \right)  \; .
\ee
Thus  $\rho$ is asymptotic to the distance from the axis $\theta=0, \pi$.  Now consider the region $\{ 0< \rho<\rho_0 \} \subset \hat{M}$. For large enough $\rho_0$ its boundary $\rho=\rho_0$ is a smooth curve in the asymptotic region and the complement $\rho>\rho_0$ is simply connected.  Therefore, since $\hat{M}$ is simply connected, it follows that $\{ 0< \rho<\rho_0 \}$ must also be simply connected. By the Riemann mapping theorem we can conformally map this region to the strip $\{ 0< \Im \zeta < \rho_0 \} \subset \mathbb{C}$ such that $\rho=0$ becomes $\Im \zeta=0$ and $\rho=\rho_0$ becomes $\Im \zeta= \rho_0$. Now, since $\rho$ is harmonic on ($\hat{M}, \hat{g})$, under the conformal map it is also harmonic on the  strip, and hence  applying the maximum principle to $\rho - \Im \zeta$ on the strip shows that in fact $\rho=\Im \zeta$. It follows that $\rho$ has no critical points in the region $\{ 0< \rho<\rho_0 \}$ and since we can make $\rho_0$ as large as we like $\rho$ has no critical points in $\hat{M}$. Therefore, the $(\rho, z)$ can be used as a global coordinate chart in the interior of the orbit space.   We may now define global polar coordinates $(r, \theta)$ by (\ref{polar}) in terms of which the metric is (\ref{AFmetric}).

The Gram matrix of $(\partial_\tau, \partial_\phi)$ for (\ref{AFmetric}) is
\be
g = \begin{pmatrix} V & V \omega \\ V \omega & \rho^2 V^{-1} + V \omega^2 \end{pmatrix}  \; ,   \label{Vom}
\ee
and in this parameterisation the equations (\ref{geq}) reduce  to the system
\begin{align}
&\hat{\nabla}^a (\rho \hat{\nabla}_a \log V) = \frac{V^2}{\rho} \hat{\nabla}^a \omega \hat{\nabla}_a  \omega   \label{Veq} \\
&\hat{\nabla}^a \left( \frac{V^2}{\rho} \hat{\nabla}_a \omega \right)=0  \; ,  \label{omeq}
\end{align}
on the orbit space $(\hat{M}, \hat{g})$.   It will be useful to reformulate this system in terms of the Ernst potential $W$ defined by
\begin{equation}
\td W = -\frac{V^2}{\rho} \hat \star \td \omega  \; ,  \label{Wdef}
\end{equation} 
which exists globally since the orbit space is simply connected.
The system (\ref{Veq}), (\ref{omeq}) now reads
\begin{align}
&\hat{\nabla}^a( \rho  \hat{\nabla}_a \log V)= \rho V^{-2} \hat{\nabla}^a W \hat{\nabla}_a W \label{Veqalt} \\ 
& \hat \nabla^a \left[\frac{\rho}{V^2} \hat\nabla_a W \right] =0  \label{Weq}  \; .
\end{align}
We are now ready to state the main result of this section.

\begin{prop}
\label{prop:AF}
Consider an AF toric gravitational instanton. The Gram matrix of Killing fields in the basis $(\partial_\tau, \partial_\phi)$ has an asymptotic expansion
\be
g = \begin{pmatrix} 1- \frac{2m}{r} + O_2(r^{-2}) & \frac{2j\sin^2\theta}{r}+ O_2(r^{-2}) \\\frac{2j\sin^2\theta}{r}+ O_2(r^{-2} ) & r^2\sin^2\theta \left(1+ \frac{2m}{r} + O_2(r^{-2})\right) \end{pmatrix}  \label{AFg}
\ee
as $r \to \infty$, 
where $(r, \theta)$ are polar coordinates defined by (\ref{polar}) and $m, j$ define two asymptotic invariants. Furthermore,
\be
e^{2\nu}= 1 + \frac{2m}{r} + O_2(r^{-2}) \; . \label{AFnu}
\ee
\end{prop}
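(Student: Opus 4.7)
The plan is to substitute formal inverse-power expansions of $V$, $\omega$ and $\nu$ into the reduced vacuum equations (\ref{Veq})--(\ref{omeq}) and the first-order system (\ref{dnu}), and to solve order by order using the $O_2$ decay assumed in Definition \ref{def:AF}. Throughout I work in the asymptotic polar coordinates $(r,\theta)$ related to $(\rho,z)$ by (\ref{polar}), and note that the decay of $\nu$ implies that $\hat g$ differs from the flat metric $\td\rho^2 + \td z^2$ by $O_2(r^{-1})$ terms, so the leading elliptic operators reduce to their flat analogues.

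\emph{Step 1 (expansion of $V$).} Write $V = 1 + v$ with $v = O_2(r^{-1})$. Since $\hat\nabla\omega = O_1(r^{-2})$, the right-hand side of (\ref{Veq}) is $O(r^{-5})$ and so subleading. On the flat background one has $\hat\nabla^a(\rho\,\hat\nabla_a\cdot) = \rho\Delta_3$, where $\Delta_3$ denotes the axisymmetric flat Laplacian on $\mathbb{R}^3$, so the leading-order content of (\ref{Veq}) is that $v$ is asymptotically harmonic on $\mathbb{R}^3$. The unique axisymmetric harmonic function on $\mathbb{R}^3$ with $r^{-1}$ decay is a constant multiple of $r^{-1}$, giving $V = 1 - 2m/r + O_2(r^{-2})$ and defining the constant $m$.

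\emph{Step 2 (expansion of $\omega$).} At leading order, (\ref{omeq}) becomes $\hat\nabla^a(\rho^{-1}\hat\nabla_a\omega) = 0$, which in flat $(\rho,z)$ coordinates reads
\[
\partial_\rho^2 \omega + \partial_z^2 \omega - \rho^{-1}\partial_\rho \omega = 0.
\]
Writing $\omega = \rho^2 \tilde\omega$ turns this into the axisymmetric five-dimensional Laplace equation $\Delta_5 \tilde\omega = 0$. Since $\omega = O_2(r^{-1})$ forces $\tilde\omega = O_2(r^{-3})$, and the only axisymmetric $\mathbb{R}^5$-harmonic function with exactly this decay (the $\ell=0$ multipole) is a constant multiple of $r^{-3}$, we obtain $\omega = 2j \sin^2\theta/r + O_2(r^{-2})$ for a constant $j$. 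Inserting these expansions into (\ref{Vom}) and noting that $V\omega^2 = O(r^{-2})$ reproduces (\ref{AFg}).

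\emph{Step 3 (expansion of $\nu$).} A direct computation for the flat Gram matrix $g_0 = \diag(1,\rho^2)$ of (\ref{g0}) gives $J_0 = g_0^{-1}\td g_0 = \diag(0,\, 2\td\rho/\rho)$, so $(\rho/8)\,\text{Tr}(J_{0,\rho}^2 - J_{0,z}^2) = 1/(2\rho)$, which cancels the background term $-1/(2\rho)$ in (\ref{dnu}) exactly. Writing $g = g_0 + h$ with $h = O_2(r^{-1})$ determined by Steps 1 and 2, the first-order corrections to $J$ produce, after integration of (\ref{dnu}) with the boundary condition $\nu\to 0$ at infinity, the expansion $\nu = m/r + O_2(r^{-2})$; the $\omega$ contributions are higher order since $\td\omega = O(r^{-2})$. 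Exponentiating yields (\ref{AFnu}).

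The only real subtlety is the cancellation between the $-1/(2\rho)$ in (\ref{dnu}) and the flat-background piece of $(\rho/8)\,\text{Tr}(J_\rho^2 - J_z^2)$; without it $\nu$ would develop a $-\tfrac12\log\rho$ axis singularity, and the metric would fail to be regular there. The remainder of the argument is algebraic, and the $O_2$ falloff ensures the formal asymptotic analysis is compatible with the elliptic regularity of the reduced harmonic-map equations.
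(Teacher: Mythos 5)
Your overall strategy matches the paper's: reduce to the flat equations on $\mathbb{R}^3$ (the conformal factor $e^{2\nu}$ cancels exactly in (\ref{Veq})--(\ref{omeq}), so this is not merely a leading-order approximation), extract the monopole/dipole coefficients $m,j$, and integrate (\ref{dnu}) using the cancellation of the $-1/(2\rho)$ term against the flat-background piece of $\tfrac{\rho}{8}\mathrm{Tr}(J_\rho^2-J_z^2)$, which you verify correctly. The one genuine difference is Step 2: you work directly with $\omega$ via the substitution $\omega=\rho^2\tilde\omega$, converting (\ref{omeq}) into an axisymmetric $\mathbb{R}^5$ Laplace equation, whereas the paper introduces the Ernst potential $W$ with $\td W=-\tfrac{V^2}{\rho}\hat\star\td\omega$ and solves a genuinely three-dimensional system (\ref{Veqalt})--(\ref{Weq}). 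Your trick is standard and would work, but it buys less than the Ernst potential does: the whole point of passing to $W$ is that $W=O_2(r^{-2})$ makes the sources in (\ref{VR3eq})--(\ref{WR3eq}) uniformly small ($O_1(r^{-6})$ and $O_1(r^{-5})$) all the way down to the axis, which is where the divisions by $\rho$ are dangerous.

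That points to the two gaps. First, your claim that $\omega=O_2(r^{-1})$ ``forces'' $\tilde\omega=\omega/\rho^2=O_2(r^{-3})$ is false as stated: with $\rho=r\sin\theta$ you only get $O(r^{-3}\sin^{-2}\theta)$, which blows up on the axis portions $\theta\to 0,\pi$ of the asymptotic end. What rescues this is axis regularity ($g_{\tau\phi}=V\omega$ must vanish quadratically in $\rho$ where $\partial_\phi$ degenerates), an input you never invoke; without it $\tilde\omega$ need not extend to a decaying harmonic function on $\mathbb{R}^5$ and the multipole argument collapses. Second, and more importantly, the conclusion of the Proposition is not just the leading coefficients but the $O_2(r^{-2})$ remainders, i.e.\ decay of two derivatives of the error. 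Your functions $\log V$ and $\tilde\omega$ are not harmonic; they satisfy Poisson equations with decaying nonlinear sources, and the statement ``a solution with $r^{-1}$ decay equals $c/r$ plus an $O_2(r^{-2})$ remainder'' is exactly the nontrivial elliptic estimate that the paper outsources to Lemmas A and B of Beig--Simon \cite{BeigII} (including the bootstrap needed to kill the $\log r/r^3$ term in $W$). Asserting that ``the only axisymmetric harmonic function with this decay is a multiple of $r^{-n}$'' elides both the inhomogeneity and the derivative control, so as written the proof of the $O_2$ error bounds — the actual analytic content of the Proposition — is missing.
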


\begin{proof}
We will adapt a proof of Beig and Simon given in the context of asymptotically flat stationary spacetimes~\cite{BeigII}.
The system (\ref{Veqalt}), (\ref{Weq}) in the coordinates (\ref{orbit}) can be written as a Poisson like equations on $E_R =:\mathbb{R}^3 \backslash B_R$: 
\begin{align}
& \Delta \log V = V^{-2} \nabla W \cdot \nabla W, \label{VR3eq}  \\
& \Delta W= 2 V^{-1} \nabla V \cdot \nabla W \; ,  \label{WR3eq}
\end{align}
where $\nabla$ is the standard derivative on $\mathbb{R}^3$ in cylindrical coordinates $(\rho,z, \varphi)$ with $\varphi$ is an auxiliary $2\pi$-periodic angle,  $\Delta:=\nabla \cdot \nabla$ the corresponding Laplacian, and $V, W$ are axisymmetric functions on $\mathbb{R}^3$ (i.e. only functions of $(\rho, z)$).    Now, by Definition \ref{def:AF} and the definition of the Ernst potential (\ref{Wdef})  it follows that   
\be
W= O_2(r^{-2}).  \label{Wdecay}
\ee 
 Therefore, the r.h.s. of (\ref{VR3eq}) is $O_1(r^{-6})$.  By  \cite[Lemma B]{BeigII} we deduce that there is a constant $m$ such that
\begin{equation}
\log V = -\frac{2m}{r} + O_2(r^{-2}) , 
\ee
which establishes the  asymptotic expansion for $g_{\tau\tau}=V$.

Next, (\ref{Wdecay}) implies the r.h.s of (\ref{WR3eq}) is $O_1(r^{-5})$. Therefore,  \cite[Lemma A]{BeigII} together with axisymmetry of $W$,  implies there are constants $N, j$ such that
\begin{equation}
W = \frac{N}{r}  - \frac{2 j \cos\theta}{r^2}  + O_2 \left(\frac{\log r}{r^3}\right)  \; .
\end{equation} 
Clearly we must set  $N=0$ in order to satisfy the estimate  (\ref{Wdecay}) that follows from asymptotic flatness.  Next, substituting the expansions for $V,  W$ back into the r.h.s of (\ref{WR3eq})  we can write
\be
\Delta \left( W  - \frac{4m j \cos \theta}{r^3} \right) =  O_1 \left(\frac{\log r}{r^6}\right) \; ,
\ee
where we have used $\Delta (r^{-3}\cos\theta)= 4 r^{-3} \cos\theta$. Therefore, by \cite[Lemma B]{BeigII}  we deduce
\begin{equation}
W   =  - \frac{2j \cos\theta}{r^2}  + O_2 \left(\frac{1}{r^3}\right) \; ,  \label{Wasymp}
\end{equation}
where the monopole term is again absent due to (\ref{Wdecay}). Finally, integrating (\ref{Wdef}) with (\ref{Wasymp}) we find
\be
\omega = \frac{2 j \sin^2\theta}{r}+ O_2(r^{-2})  \; ,
\ee
where we have set integration constant such that $\omega \to 0$ at infinity. This therefore establishes the decay for $g_{\tau \phi}= V \omega$.

It remains to find the asymptotic expansion of the conformal factor $e^{2\nu}$. In terms of the coordinates (\ref{polar}) one finds (\ref{dnu}) imply
\begin{equation}\begin{aligned}
\partial_r \nu & = -\frac{m}{r^2}  + O_1(r^{-3}) \; , \\
\partial_\theta \nu & = O_1(r^{-2})  \; ,
\end{aligned}
\end{equation} which integrates to $\nu = m/r + O_2(r^{-2})$ and hence (\ref{AFnu}).
\end{proof}

It is worth emphasising that under the remaining coordinate freedom $z\to z+ c$ where $c$ is a constant, the asymptotic parameters $m, j$ are invariant. These invariants are euclidean analogues of the mass and angular momentum of stationary and axisymmetric spacetimes (hence the notation). We will therefore refer to $m, j$ as mass and angular momentum of AF toric instantons. As an example, consider euclidean Schwarzschild in the coordinates (\ref{polar}):
\begin{equation}
\mathbf{g}= \left( 1 + \frac{\mu}{2r}\right)^4 \left[ \td r^2 + r^2 \td \theta^2 \right]  + \frac{\left( 1 - \frac{\mu}{2r}\right)^2}{\left( 1 + \frac{\mu}{2r}\right)^2} \td \tau^2 +  \left( 1 + \frac{\mu}{2r}\right)^4 r^2 \sin^2\theta \td \phi^2  \; ,
\end{equation} which, comparing to our asymptotic form, shows $m=\mu$ and $j=0$. 
Similarly, for the euclidean Kerr instanton given in  Appendix \ref{app:explicit} and verify that $m, j$ correspond to euclidean versions of the mass and angular momentum parameters.

We emphasise that strictly speaking  we have not defined the asymptotic invariants $(m,j)$ in the more general context of AF gravitational instantons, although we expect a definition analogous to the ADM mass to exist. To this end, we find the following conjecture natural.
\begin{conjecture}
To any AF gravitational instanton $(M, \mathbf{g})$, we can assign a geometric invariant `mass' $m$ (which coincides with the invariant $m$ above for toric instantons) with the property that $m\geq 0$ with equality if and only if $(M, \mathbf{g})$ is flat.
\end{conjecture}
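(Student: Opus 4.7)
The plan is to first give a coordinate-invariant definition of the mass $m$ via an ADM-type boundary integral adapted to the AF asymptotic model of Definition \ref{def:AF}, then to verify compatibility with the toric invariant of Proposition \ref{prop:AF}, and finally to prove positivity via a Witten-type spinor argument, with rigidity following from the standard analysis of parallel spinors.

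First I would define the mass. On the end of $(M, \mathbf{g})$, fix a diffeomorphism to $M_\flat$ and consider the tensor $h := \mathbf{g} - \mathbf{g}_0$ in these coordinates. Following Minerbe's construction of the ALF mass, and in direct analogy with the ADM mass, set
\begin{equation}
m := \lim_{R\to\infty} \frac{1}{16\pi \beta} \int_{T_R} \bigl( \nabla_0^j h_{ij} - \nabla_{0,i} \operatorname{tr}_{0} h \bigr) n_0^i \, dA_0,
\end{equation}
where $T_R = \{r=R\}$ is the torus bundle over $S^2_R$ in the end, and $\nabla_0$, $n_0$, $dA_0$, $\operatorname{tr}_0$ are taken with respect to $\mathbf{g}_0$, with the factor $1/\beta$ normalising out the length of the distinguished circle. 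I would check that the decay conditions of Definition \ref{def:AF} ensure the limit exists, that it is independent of the choice of AF chart (as in the ADM case this uses the divergence theorem together with the improved decay of the constraint tensor of a Ricci-flat metric), and that substituting the expansion of Proposition \ref{prop:AF} into the integrand reproduces exactly the invariant $m$ in the toric case. The angular momentum $j$ would appear only in subleading terms that integrate to zero against the round measure on $S^2$.

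Next I would establish $m \geq 0$. Assuming $M$ is spin, fix a parallel spinor $\psi_0$ on $(M_\flat, \mathbf{g}_0)$ that is invariant under the twisted identification $(\tau,\phi)\sim(\tau+\beta,\phi+\beta\Omega)$, and solve the Dirac equation $D\psi = 0$ on $M$ with $\psi - \psi_0 \to 0$ at infinity, in suitable weighted Sobolev spaces on the AF end. The Lichnerowicz--Weitzenb\"ock identity $D^2 = \nabla^*\nabla + \tfrac{1}{4}R$, combined with $R=0$ by Ricci-flatness and integration by parts on the compact region bounded by $T_R$, yields
\begin{equation}
\int_{M} |\nabla \psi|^2 \, d\mu_{\mathbf{g}} = \lim_{R\to\infty} \int_{T_R} \langle \psi, (\nabla_{n} + n \cdot D)\psi \rangle \, dA,
\end{equation}
and a direct computation of the right-hand side using the AF expansion yields the surface integral defining the mass, up to a positive multiple of $\|\psi_0\|^2$. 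Positivity follows. Rigidity in the equality case $m=0$ then forces $\nabla\psi = 0$ for every admissible boundary datum $\psi_0$; combined with simple-connectedness of $M$ and the AF asymptotics, the existence of enough parallel spinors to trivialise the holonomy implies $(M, \mathbf{g})$ is flat and hence isometric to $(M_\flat, \mathbf{g}_0)$.

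The main obstacle lies in the analysis when $\Omega \neq 0$. Unlike the standard ALF setting of Minerbe, the Killing field $\partial_\tau$ with bounded norm is not tangent to the identification circle, and the space of parallel spinors on $\mathbb{R}\times \mathbb{R}^3$ that descend to the quotient $M_\flat$ is a strict subspace of the spinor bundle, cut out by a holonomy condition that depends on $\beta\Omega$. One must verify that enough such spinors survive to conclude $m \geq 0$ (a single nontrivial $\psi_0$ suffices for positivity, though the full space must be available for rigidity), and establish Fredholm theory for the Dirac operator on weighted spaces adapted to the tilted circle at infinity. This is the principal analytic input that goes beyond what is currently available in the ALF literature, and is where I expect the substantive work to lie.
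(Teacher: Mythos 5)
The statement you are proving is a \emph{conjecture} in the paper: the authors give no proof, only the remark that they expect it to follow ``in a similar manner to the positive-mass theorem for ALF instantons'' of Minerbe. So there is no proof in the paper to compare against, and what you have written is a strategy outline rather than a proof. Your plan (ADM-type boundary integral normalised by $\beta$, Witten--Lichnerowicz spinor argument, rigidity from parallel spinors) is the natural one and is essentially the route the authors gesture at, but as it stands it contains genuine gaps beyond the analytic issues you flag yourself.

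The most concrete obstruction is the spin assumption. The conjecture is stated for \emph{all} AF gravitational instantons, and the paper's central new example, the Chen--Teo instanton, lives on $\mathbb{CP}^2\setminus S^1$. Since $H^2(\mathbb{CP}^2;\mathbb{Z}/2)\to H^2(\mathbb{CP}^2\setminus S^1;\mathbb{Z}/2)$ is injective (the relative group $H^2(\mathbb{CP}^2,\mathbb{CP}^2\setminus S^1;\mathbb{Z}/2)$ vanishes by Thom isomorphism over the circle), $w_2$ does not vanish on the complement and $\mathbb{CP}^2\setminus S^1$ is \emph{not} spin. Your Witten-type argument therefore cannot even be set up on the one nontrivial example the conjecture must cover; you would need either Minerbe's non-spinor approach (which uses $\mathrm{Ric}\geq 0$ directly and a different technique) or a spin-c modification, and neither is addressed. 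Beyond that, the steps you defer are the substance of the theorem, not technicalities: existence and Fredholm theory for asymptotically constant harmonic spinors on an end whose identification circle has unbounded length when $\Omega\neq 0$; the identification of the boundary term in the Weitzenb\"ock identity with your proposed mass integral (for ALF this computation is already delicate, and the tilted quotient changes which spinors descend to $M_\flat$); well-definedness and chart-independence of the mass integral itself, which is precisely the point the authors flag as open (``there is as yet no analogous definition of a mass'' when the circle at infinity has unbounded length); and the rigidity step, where a single parallel spinor on a non-hyperk\"ahler 4-manifold does not force flatness, so you must show the full space of admissible boundary data survives the holonomy condition imposed by $\beta\Omega$. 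As written, the proposal is a reasonable research plan for the conjecture, not a proof of it.
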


We expect the above can be established is a similar manner to the positive-mass theorem for ALF instantons~\cite{Minerbe}. It would be interesting to investigate this further.

\subsection{Axis and rod structure}
\label{sec:axis}

In the orbit space the axis $\mathcal{A}$ corresponds to the $\rho=0$ boundary of $\hat{M}$. As shown in~\cite{Hollands:2008fm}, this divides into boundary segments corresponding to intervals $$(-\infty, z_1), \quad (z_1, z_2),\quad  \dots, \quad (z_N, \infty),$$ with $z_1<z_2<\dots<z_N$, on which $g_{ij}$ is rank-1, that are separated by corners $z_A, A=1, \dots, N$ where $g_{ij}$ is rank-0. We denote the intervals, called rods, by $I_A=(z_{A-1}, z_A)$  for $A=1, \dots, N+1$, where it is understood that $z_0=-\infty$ and $z_{N+1}=\infty$. 

There are a number of invariants associated to this boundary. The length of each finite rod $I_A$, where $A=2, \dots, N$ is 
\be
\ell_A= z_{A}-z_{A-1}  \; .
\ee
The rod vector  $v_A$ associated to each rod $I_A$ is the unique up-to-scale Killing vector $v_A$ which vanishes on $I_A$ (recall $g$ is rank-1 on $I_A$). We fix the scale such that $v_A$ is a $2\pi$-periodic vector so we can write
\be
v_A= v_A^i \eta_i
\ee
where $\underline{v}_A:= (v^1_A, v^2_A) \in \mathbb{Z}^2$ are coprime integers and $\eta_i$ are $2\pi$-periodic generators of the torus action.  Note this does not fully fix $v_A$, which is now defined up to an overall sign.

\begin{definition}
\label{def:rod}
 The collection of boundary data
\be
\mathcal{R}:= \{ (I_A, \underline{v}_A)\, | \, A=1, \dots, N+1 \} 
\ee
is called the {\it rod structure} of $(M, \mathbf{g})$. If the rod vectors of consecutive rods $I_{A-1}, I_A$ satisfy the condition
\be
\det \begin{pmatrix}\underline{v}_{A-1},   \underline{v}_A \end{pmatrix} = \pm 1  \; .   \label{admissible}
\ee
the rod structure is said to be {\it admissible}.
\end{definition}

The rod structure is a fundamental invariant for such spaces.
The admissibility condition is required in order to avoid orbifold singularities at the corners~\cite{Hollands:2008fm}.
We will ultimately be interested in such admissible rod structures, although much of our analysis will not use this condition.

For AF geometries as defined above we can introduce a preferred basis of independently $2\pi$-periodic Killing fields as follows.  Define $(\psi, \chi)$ by 
\be
\tau= \hat{\beta} \psi, \qquad \phi = \chi+ \Omega \hat{\beta} \psi  \; ,
\ee
where $\hat{\beta}:= \beta/(2\pi)$, so that the identifications on $(\tau, \phi)$ are equivalent to $(\psi, \chi)\sim (\psi+2\pi, \chi)$ and $(\psi, \chi)\sim (\psi, \chi+2\pi)$.
This gives
\be
\partial_\psi=\hat{\beta} \left(  \partial_\tau+ \Omega \partial_\phi \right) , \qquad \partial_\chi = \partial_\phi  \; ,  \label{basis}
\ee
so the change of basis matrix is
\be
L=  \left( \begin{array}{cc} \hat{\beta} & \Omega\hat{\beta} \\ 0 & 1\end{array} \right)    \label{L}
\ee
and the Gram matrix of Killing fields is this new basis is $\tilde{g} = L g L^T$ where $g$ is relative to our original basis $(\partial_\tau, \partial_\phi)$. Note that in our definition of AF the parameter $\hat{\beta} \Omega$ is only defined up to an additive integer.  In turn, this implies the above $2\pi$-periodic basis is only defined up to $(\partial_\psi, \partial_\chi) \to (\partial_\psi + q \partial_\chi, \partial_\chi)$ where $q \in \mathbb{Z}$ (this preserves the admissibility condition (\ref{admissible})).

Relative to the $(\partial_\psi, \partial_\chi)$ basis the rod vectors of the semi-infinite rods are thus $\underline{v}_1= (0,1)$ and $\underline{v}_{N+1}=(0,1)$, which we assume henceforth.  For example, the euclidean Kerr solution has one finite rod $I_2$, so $N=2$, with rod vector $\underline{v}_2=(1,0)$. It is interesting to list all admissible rod structures for the simplest cases. For example, suppose there is one finite rod, with rod vector $\underline{v}_2=(p,q)$. Then the admissibility conditions $\det ( \underline{v}_1, \underline{v}_2)=\pm 1$  and $\det ( \underline{v}_2, \underline{v}_3)=\pm 1$ both give $p=\pm 1$. Therefore, since the rod vector is only defined up to a sign, the most general case is $\underline{v}_2=(1, q)$ where $q \in \mathbb{Z}$. Furthermore, using the freedom in the definition of the basis discussed above we can always set $q=0$, which corresponds to the Kerr rod structure.  This is depicted in Figure \ref{fig:onerod}.
\begin{figure}[h!]

\centering
\subfloat{
\begin{tikzpicture}[scale=1.2, every node/.style={scale=0.6}]
\draw[very thick](-4,0)--(-1.2,0)node[black,left=2.8cm,above=.2cm]{$(0,1)$};
\draw[thick](-1,0)--(1,0)node[black,left=1.9cm,above=.2cm]{$(1,0)$};
\draw[very thick](1.2,0)--(4,0)node[black,left=2.8cm,above=.2cm]{$(0,1)$};
\draw[fill=black] (-1.1,0) circle [radius=.1] node[black,font=\large,below=.1cm]{};
\draw[fill=black] (1.1,0) circle [radius=.1] node[black,font=\large,below=.1cm]{};
\end{tikzpicture}}
\caption{General rod structure with one finite rod is that of the Kerr instanton. \label{fig:onerod}}
\end{figure}
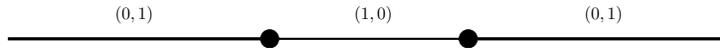

For two finite rods, so $N=3$, we find that the most general admissible rod structure is given by the rod vectors $\underline{v}_2=(1, q)$ and $\underline{v_3}=(1,q\pm 1 )$ where $q\in \mathbb{Z}$ and again we have used the overall sign freedom to fix the rod vectors. Again, using  the above freedom in the basis we can always set $q=0$, which corresponds to the Chen-Teo instanton~\cite{Chen:2011tc}. This is depicted in Figure \ref{fig:tworod}.

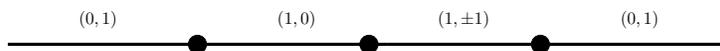
\begin{figure}[h!]
\subfloat{
\begin{tikzpicture}[scale=1.2, every node/.style={scale=0.6}]
\draw[very thick](-4,0)--(-2,0)node[black,left=2cm,above=.2cm]{$(0,1)$};
\draw[very thick](-1.8,0)--(0.2,0)node[black,left=2cm,above=.2cm]{$(1,0)$};
\draw[very thick](0.2,0)--(1.8,0)node[black,left=1.5cm,above=.2cm]{$(1,\pm 1)$};
\draw[very thick](2,0)--(4,0)node[black,left=2cm,above=.2cm]{$(0,1)$};
\draw[fill=black] (-1.9,0) circle [radius=.1] node[black,font=\large,below=.1cm]{};
\draw[fill=black] (0,0) circle [radius=.1] node[black,font=\large,below=.1cm]{};
\draw[fill=black] (1.9,0) circle [radius=.1] node[black,font=\large,below=.1cm]{};
\end{tikzpicture}}
\caption{General rod structure with two finite rods is that of the Chen-Teo instanton. \label{fig:tworod}}
\end{figure}

 Each component of the axis $I_A$  is defined by the zero-set of the Killing field $v_A$ and the metric is smooth at these fixed points sets provided two conditions are met: (i) the metric functions $g_{ij}$ are smooth functions of $(\rho^2,z)$; (ii) there are no conical singularities at $I_A$,  the condition for which is
\be
\lim_{\rho\to 0, z\in I_A} \frac{ \td |v_A|^2 \cdot \td |v_A|^2}{4|v_A|^2}=1 \; . \label{noconical}
\ee
We will not address condition (i) in this work, although we expect  it can be established as in the case of  four-dimensional stationary and axisymmetric black hole spacetimes~\cite[Theorem 5]{wein}. Condition (ii) typically imposes a constraint on the parameters of a solution, although for the semi-infinite axes this is automatically satisfied for AF solutions as above.

We will need to analyse the geometry near the axis in more detail.
To this end it is convenient to introduce a $2\pi$-periodic basis of Killing fields $(u_A, v_A)$,  adapted to $I_A$, where $v_A$ is the rod vector of $I_A$ and 
\be
L_A= \begin{pmatrix} u^1_A & u^2_A \\ v^1_A & v^2_A \end{pmatrix} \in GL(2, \mathbb{Z})  
\ee
defines an independent vector $u_A$ that is non-vanishing in the interior of $I_A$.  Then the Gram matrix relative to the adapted basis is $g'= L_A g L_A^T$ where $g$ is relative to our preferred basis $(\partial_\psi, \partial_\chi)$.  Furthermore, it is helpful to parameterise $g'$ over $z\in I_A$ as,
\be
g'= L \begin{pmatrix}  h+ \rho^2 h^{-1} w^2 & \rho^2 h^{-1} w \\ \rho^2 h^{-1} w & \rho^2 h^{-1}  \end{pmatrix} L^T \; , \label{axisg}
\ee
where $h>0$ and $w$ are smooth functions of $(\rho^2, z)$ at $\rho=0$. This parameterisation can be established by combining the facts that in the adapted basis $\underline{v}'_A=(0,1) \in \text{ker}(g')_{\rho, z \in I_A}$ and $\text{rank}(g')_{\rho, z \in I_A}=1$, together with the axis regularity conditions (i) and (ii) mentioned above (the factors of $L$ are required to ensure $\det g'= \hat{\beta}^2 \rho^2$).  Note that in this parameterisation $|v_A|^2= \rho^2 h^{-1}$.

 We may determine the geometry near the axes as follows. Using (\ref{axisg}) it can be shown that  (\ref{dnu}) as $\rho \to 0$ simplifies to $\partial_z \nu = -\partial_z h/(2h) +O(\rho^2)$ and $\partial_\rho \nu= O(\rho)$, which thus integrates to give
\be
e^{2\nu} = \frac{c_A^2}{h} +O(\rho^2)\; ,   \qquad z \in I_A,   \label{nuaxis}
\ee
where $c_A>0$ is a constant.  The metric near the axis rod $I_A$ then takes the form
\be
\mathbf{g}=  \frac{1}{h} \left[ (c_A^2 +O(\rho^2))\td \rho^2+ \rho^2 (\td \phi^2+ (w+\Omega)\hat{\beta} \td \phi^1)^2 \right]+ \left( \frac{c_A^2}{h}+O(\rho^2) \right)\td z^2+ \hat{\beta}^2 h (\td \phi^1)^2   \; ,\label{gnearaxis}
\ee
as $\rho\to 0$, where $\phi^i$ are $2\pi$-periodic coordinates adapted so $u_A=\partial_{\phi^1}, v_A=\partial_{\phi^2}$. Therefore,  for each $z\in I_A$ we see that there is a conical singularity at $\rho=0$ in the $(\rho, \phi^2)$ plane which is absent if and only if $c_A=1$. This also follows from (\ref{noconical}) together with (\ref{nuaxis}). Furthermore, it is now clear that the metric extends smoothly to $\rho=0$ as a consequence of $(h, w)$ being smooth functions of $(\rho^2, z)$ there (this can be seen by transforming to cartesian coordinates in the $(\rho, \phi^2)$ plane).

The finite axis rods $I_A$ correspond to 2-surfaces of $S^2$ topology $C_A$ in the spacetime. These are known as bolts~\cite{Gibbons:1976ue}. To see this, simply note that on the axis $\rho=0$ and $z \in I_A$, defined by the vanishing of $v_A$,  there is an independent axial vector $u_A$ that is non-vanishing in the interior of $I_A$ but must vanish at the end points of $I_A$ (since these correspond to corners of the orbit space). This defines a closed 2-surface with a $U(1)$-action with two fixed points which must be homeomorphic to $S^2$.   These surfaces are representatives of a basis of 2-cycles $[C_A] \in H_2(M)$.

We may determine the geometry of the axes as follows. 
From (\ref{gnearaxis}) deduce that the metric $\mathbf{g}_A:= \iota_A^* \mathbf{g}$ induced on $C_A$, where $\iota_A: C_A \to M$ is the natural inclusion, is simply
\be
\mathbf{g}_A =  \frac{\td z^2}{h(z)} + \hat{\beta}^2 h(z) (\td \phi^1)^2 \; . \label{bubble}
\ee
This coordinate system only covers the interior of $I_A$ and at the endpoints $h=0$ (since these are corners of the orbit space). The metric extends smoothly to $C_A \cong S^2$ if and only if
\be
h'(z_{A-1})=- h'(z_A)=\frac{1}{\hat{\beta}} \; .   \label{smoothcorner}
\ee
This latter condition ensures the absence of conical singularities at points corresponding to the corners of the orbit space.  We may now compute the area of $C_A$  from (\ref{bubble}) which is simply
\be
A[C_A] := \int_{C_A} \td\text{vol}(\mathbf{g}_A) ={\beta} \ell_A  \; .  \label{area}
\ee 
This provides a geometrical interpretation of the orbit space invariants $\ell_A$.

\subsection{Half-flat  instantons}

A four-dimensional Riemannian manifold is half-flat  (i.e.  the Riemann tensor is self-dual or anti-self-dual)  if and only if it is K\"ahler and Ricci-flat.  Thus half-flat gravitational instantons arise as a special case of the Ricci-flat instantons. In particular, we would like to classify toric half-flat instantons. In fact, long ago Gibbons and Ruback~\cite{Gibbons:1987sp} showed that any hyper-K\"ahler metric with two commuting Killing vectors must  be a Gibbons-Hawking metric~\cite{Gibbons:1979zt}. 
For completeness we give here a more explicit argument in the context of toric K\"ahler manifolds, i.e., K\"ahler manifolds admitting a holomorphic and isometric torus action.

\begin{theorem}
Any toric half-flat  gravitational instanton is a Gibbons-Hawking metric  determined by an axisymmetric harmonic function $H$ on $\mathbb{R}^3$.
\end{theorem}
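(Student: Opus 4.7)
My plan is to extract a tri-holomorphic Killing vector, use it to write the metric in Gibbons-Hawking form, and then exploit the remaining torus generator to force axisymmetry of the harmonic data.

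First, Kähler together with Ricci-flat in real dimension four forces the restricted holonomy into $SU(2)=Sp(1)$, so $(M,\mathbf{g})$ is hyper-Kähler, carrying three parallel Kähler forms $\omega_I,\omega_J,\omega_K$. A Killing vector preserves all tensors built from the Levi-Civita connection, so it maps parallel forms to parallel forms; the space of parallel self-dual two-forms on a hyper-Kähler four-manifold is spanned by the $\omega_\alpha$, and the pointwise orthogonality $\omega_\alpha\wedge\omega_\beta\propto\delta_{\alpha\beta}\,\mathrm{vol}$ then forces $\mathcal{L}_\eta \omega_\alpha = c_{\alpha\beta}(\eta)\,\omega_\beta$ with $c(\eta)\in\mathfrak{so}(3)$. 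The assignment $\eta\mapsto c(\eta)$ is a Lie algebra homomorphism, whose restriction to the abelian torus algebra $\mathfrak{t}$ has abelian image and is therefore at most one-dimensional. Its kernel thus contains a non-trivial integer combination $K=a^i\eta_i$ which is \emph{tri-holomorphic}.

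Second, simple-connectedness of $M$ combined with $\mathcal{L}_K\omega_\alpha=0$ makes each closed one-form $\iota_K\omega_\alpha$ exact, producing global moment maps $\mu_\alpha$ with $\td\mu_\alpha=\iota_K\omega_\alpha$. The classical Gibbons-Hawking reduction of a tri-holomorphic circle action on a hyper-Kähler four-manifold then places the metric in the form
\[
\mathbf{g}=H^{-1}(\td\tau+A)^2 + H\,\td\vec\mu\cdot\td\vec\mu,\qquad H=|K|^{-2},
\]
where $\tau$ parameterises the $K$-orbits, $\vec\mu$ provides coordinates on the quotient away from the fixed-point set of $K$, and the hyper-Kähler constraint reduces to $\star_3\,\td A=\td H$, so $H$ is harmonic on $\mathbb{R}^3$.

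Third, the complementary torus generator $K'$ commutes with $K$ and therefore descends to a Killing vector $\bar K'$ of the base $(\mathbb{R}^3, H\,\td\vec\mu\cdot\td\vec\mu)$ that also preserves $H=|K|^{-2}$; together, these conditions make $\bar K'$ a Killing vector of the underlying flat $\mathbb{R}^3$. Faithfulness of the $T^2$ action forces $K$ and $K'$ to be linearly independent on a dense set, so $\bar K'$ is non-trivial, and since it generates a compact isometry subgroup it must be a rotation about some axis of $\mathbb{R}^3$. Hence $H$ is axisymmetric. The main technical obstacle is the second step: importing the Gibbons-Hawking reduction for a tri-holomorphic circle action and extending its validity globally across the fixed-point set of $K$, where $H$ is expected to develop pole singularities corresponding to the corners of the orbit space; the remainder of the argument is essentially algebraic.
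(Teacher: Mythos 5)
Your argument is correct in outline but takes a genuinely different route from the paper's. You follow the classical Gibbons--Ruback path: promote K\"ahler plus Ricci-flat to hyper-K\"ahler, locate a tri-holomorphic Killing field inside the torus algebra via the action of $\mathfrak{t}$ on the three parallel self-dual two-forms, perform the Gibbons--Hawking reduction with the moment maps as base coordinates, and then obtain axisymmetry of $H$ by showing the residual torus generator descends to a rotation of the flat base. The paper explicitly cites Gibbons--Ruback for this and instead gives a ``more explicit argument'': it writes the metric in Abreu's symplectic coordinates with potential $f$, uses the K\"ahler Ricci-flatness condition $\log\det F=2y^1$ to identify $\rho=e^{y^1}$ and its harmonic conjugate $z=\partial_2 f$ directly, and reads off $H=F_{22}^{-1}$, $\chi=F_{12}/F_{22}$, with the Gibbons--Hawking equation $\rho\,\td H=-\hat\star\,\td\chi$ following from a one-line computation using $\det F=\rho^2$; axisymmetry of $H$ is then automatic because $H$ depends only on the torus-invariant coordinates $(\rho,z)$, which serve as cylindrical coordinates on the auxiliary $\mathbb{R}^3$. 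Your approach buys conceptual transparency and does not rely on the toric K\"ahler normal form; the paper's buys explicitness and quietly absorbs the two points you would still have to nail down: (i) the claim that the kernel of $\mathfrak{t}\to\mathfrak{so}(3)$ contains an \emph{integer} combination really requires the group-level homomorphism $T^2\to SO(3)$ (whose kernel has a subtorus as identity component), not merely the Lie-algebra map --- although for the statement as given any tri-holomorphic Killing field suffices, so this is a gloss rather than a gap; and (ii) the global identification of the quotient with a domain in $\mathbb{R}^3$ via the moment map across the fixed-point set of $K$, the obstacle you honestly flag, which in the paper is handled by the globally defined symplectic chart away from the axis. Neither proof fully resolves the axis behaviour at this stage; in the paper that analysis is deferred to the subsequent nonexistence argument for asymptotically flat half-flat instantons.
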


\begin{proof}
It is well known that any toric K\"ahler metric can be written as (see e.g.~\cite{Abreu}),
\be
\mathbf{g}= F_{ij} \td y^i \td y^j+ F_{ij} \td \tilde{\phi}^i \td \tilde{\phi}^j,    \label{kahler}
\ee
where $F_{ij}= \partial_i \partial_j f$ with $\partial_i:= \partial_{y^i}$,  $f=f(y)$ is the K\"ahler potential,  $ \Omega= F_{ij} \td y^i \wedge \td \tilde{\phi}^j$ is the K\"ahler form,  $z^i= y^i + i \tilde{\phi}^i$ are holomorphic coordinates, and $i,j \in \{ 1,2 \}$. Here $\partial_{\tilde{\phi}^i}$ generate the torus symmetry, although for convenience we do not assume they have periodic orbits, i.e. they are some real linear combination of the $2\pi$-periodic Killing fields. In fact, since $(M, \mathbf{g})$ is simply connected the chart is globally defined away from the axis. This chart is defined up to a rigid $GL(2, \mathbb{R})$ transformation $ z^i \to A_{ij} z^j$ where $A\in GL(2, \mathbb{R})$ and a translation freedom $z^i \to z^i + c^i$ where $c^i\in \mathbb{R}$.  

From standard formulas for the Ricci tensor of a K\"ahler metric in holomorphic coordinates,
\be
R_{z^i \bar{z}^j}=\partial_{z^i} \partial_{\bar{z}^j} \log \det g_{z^i \bar{z}^j} = \partial_i \partial_j \log \det F_{ij} \; ,
\ee
where the second equality follows from invariance under the torus symmetry (i.e. $F_{ij}$ depends only on $y^i$). Therefore, the metric is Ricci-flat iff
\be
\log \det F_{ij}= a_i y^i+ b \; ,
\ee
where $a_i, b$ are constants. In order to have a non-empty axis we must have at least one $a_i\neq 0$ and therefore using the translation and $GL(2, \mathbb{R})$ freedom we may always set $\log \det F= 2 y^1$ (this will be a convenient choice below).

We wish to write the metrics  (\ref{kahler}) in the form (\ref{metric}) and the orbit space metric in the $(\rho, z)$ coordinates (\ref{orbit}). To this end, first note that the Gram matrix of Killing fields of (\ref{kahler}) is $F_{ij}$ and therefore it immediately follows that $\rho^2= \det F$. Therefore, by Ricci-flatness and the above coordinate choice we deduce
\be
\rho= \exp (y^1)  \; .
\ee
The orbit space metric for (\ref{kahler}) is $\hat{g}= F_{ij} \td y^i \td y^j$ and the harmonic conjugate to $\rho$ is given by
\be
\td z= - \hat{\star} \td \rho = \td (\partial_2 f) \; ,
\ee
where the second equality follows from a short computation using $\det F= \rho^2$, and we assume $\td y^1\wedge \td y^2$ is positive orientation. Therefore, we have determined the coordinate change $(y^1, y^2) \mapsto (\rho, z)$. Matching the orbit space metric to (\ref{orbit}) then reduces to 
\be
e^{2\nu} = H  \; ,  \label{HFnu}
\ee
where we define
\be
H := F_{22}^{-1}  \; , \qquad \chi := \frac{F_{12}}{F_{22}}  \; .
\ee
A straightforward calculation reveals that
\be
\rho \, \td H= - \hat{\star} \td \chi \; , \label{GH}
\ee
for any K\"ahler potential $f$, where we have  used $F_{11}= (\rho^2+F_{12}^2)/F_{22}$ (i.e. $\det F=\rho^2$) to simplify the r.h.s. of (\ref{GH}). Finally, in terms of $H, \chi$ the metric takes the form
\be
\mathbf{g}=  H^{-1}( \td \tilde{\phi}^2+ \chi \td \tilde{\phi}^1)^2 + H [ \rho^2 (\td \tilde{\phi}^1)^2+ \td \rho^2+ \td z^2]  \; ,  \label{GHmetric}
\ee
which together with (\ref{GH}) is precisely a Gibbons-Hawking metric.  Indeed, the metric in square brackets in (\ref{GHmetric}) is euclidean space $\mathbb{R}^3$ in cylindrical polar coordinates and (\ref{GH})  states that $H$ is an axisymmetric harmonic function on $\mathbb{R}^3$. 
\end{proof}

We are now ready to prove the main result of this section. \vspace{.2cm}

\begin{myproof}{Theorem}{\ref{th:HF}}
For a half-flat instanton comparing (\ref{HFnu}) with (\ref{AFnu}) we immediately deduce that  
\be
H= 1+ \frac{2m}{r}+ O_2(r^{-2}) \;.   \label{HAF}
\ee
Solving (\ref{GH}) then implies
\be
\chi = \chi_0+ 2 m \cos\theta+ O_2(r^{-1})\; ,
\ee
where $\chi_0$ is a constant.
Now, since (\ref{AFg})  shows that any linear combination of the Killing fields which is bounded at infinity must be proportional to $\partial_\tau$, comparing to (\ref{GHmetric}) we deduce that $\tilde{\phi}^2= \tau + c_0 \phi$ and $\tilde{\phi}^1=\phi$, where $c_0$ is some constant. Hence
\be
g_{\tau \phi} = H^{-1} (\chi+c_0)  = \chi_0+c_0+ 2 m \cos\theta+ O_2(r^{-1})
\ee
and therefore AF implies $m=0$ (and $c_0=-\chi_0$). 

Now, from (\ref{GHmetric})  we also deduce that $H^{-1}= g_{\tau\tau}$, so $H^{-1}$ is a smooth nonnegative function on $(M, \mathbf{g})$.  In particular, wherever $g_{\tau\tau}>0$ the function $H$ must be smooth and positive.  However, this also shows that $H$ is singular at any fixed points of $\partial_\tau$.  In fact, on the interior of any rod $I_A$ we have (\ref{nuaxis}) and therefore (\ref{HFnu}) also shows that $H$ is a smooth positive function on the interior of $I_A$.  Therefore, the fixed points of $\partial_\tau$ must correspond to the corners of the orbit space $z=z_A$, i.e. where $\partial_\psi=\partial_\chi=0$.  Thus $H$ may have isolated singular points at the corners of the orbit space and is otherwise smooth everywhere else.  

Next, from the $zz$ component of the metric induced on the interior of a finite rod $I_A$, which is given by (\ref{bubble}),  we deduce that  $H= h^{-1}$ on $I_A$ where $h(z)$ vanishes at the endpoinds. Hence smoothness at the endpoints (\ref{smoothcorner}) shows that
\be
H|_{\rho=0}= \frac{\hat{\beta}}{|z-z_A|} + O(1)
\ee
as $z \to z_A$, 
for any $A=1, \dots, N$.  Therefore, since $H$ is an axisymmetric harmonic function on $\mathbb{R}^3$, we deduce that $H$ must have simple poles at the corners and hence
\be
H =\frac{\hat{\beta}}{\sqrt{\rho^2+(z-z_A)^2}}+ O(1)  \; ,
\ee
where the $O(1)$ term is a harmonic function smooth at $(\rho,z)=(0,z_A)$.  Thus we can write
\be
H = H_0+ \sum_{A=1}^N \frac{\hat{\beta}}{\sqrt{\rho^2+(z-z_A)^2}}  \; ,
\ee
where $H_0$ is an everywhere smooth harmonic function on $\mathbb{R}^3$.  Now, by asymptotic flatness we have $H_0\to 1$ at infinity, hence $H_0$ is a bounded harmonic function everywhere smooth on $\mathbb{R}^3$, which must be a constant, so $H_0=1$.  Finally, comparing to (\ref{HAF}), we deduce that $m= N \hat{\beta}/2$ which for $N\geq 1$ clearly contradicts the fact asymptotic flatness requires $m=0$.
\end{myproof}

  \section{Uniqueness and existence}
  \label{sec:UEtheorems}
  
\subsection{Uniqueness theorem and Mazur identity}

The uniqueness problem for a class of nonlinear PDEs on the Riemannian manifold $(\hat{M}, \hat{g})$, where $\hat{g}$ is given globally in $(\rho,z)$ coordinates by (\ref{orbit}),  of the form
\be
\td \hat{\star} (\rho J)=0,  \qquad J:= \Phi^{-1} \td \Phi  \; , 
\ee
where $\Phi$ is a real  symmetric positive-definite matrix, can be solved using the Mazur identity~\cite{Mazur:1984wz}. Note that the conformal factor in (\ref{orbit}) cancels in this PDE and therefore we can, and will, treat it as defined on  the $(\rho,z)$ half-plane with a fixed flat metric.  We recall the derivation of this identity.  

Thus suppose we have two solutions $\Phi$ and $\tilde{\Phi}$ to this PDE and define their `difference'
\be
\Psi= \tilde{\Phi} \Phi^{-1}- \text{I}  \; .
\ee
Elementary calculations reveal that $\td \Psi= \tilde{\Phi} \mathring{J} \Phi^{-1}$, 
where $\mathring{J} := \tilde{J}- J$,  and $J^T= \Phi J \Phi^{-1}$, 
where in the latter relation we have used symmetry of $\Phi$. The Mazur identity then follows:
\be
\td \hat{\star} (\rho \, \td \text{Tr} \Psi)= \rho \text{Tr} ( \mathring{J}^T \tilde{\Phi} \cdot \mathring{J}\Phi^{-1})   \; .
\ee
For this to be useful we need the r.h.s. to be positive definite. This indeed follows from the property that $\Phi$ is positive-definite, which allows one to write $\Phi= S S^T$ for some $S$ real invertible matrix (the square root matrix). Then we can write the identity as
\be
\td \hat{\star} (\rho \, \td \text{Tr} \Psi)= \rho \text{Tr} (N^T  \cdot N)   \; ,  \label{mazur}
\ee
where $N:= \tilde{S}^T \mathring{J} S^{T-1}$ so the r.h.s. is manifestly non-negative. It follows that integrating the  Mazur identity (\ref{mazur}) over $\hat{M}$  gives
\be
\int_{\partial \hat{M}} \rho\,  \partial_n  \text{Tr} \Psi \;  \td \text{vol} = \int_{\hat{M}}  \rho \;  \text{Tr} (N^T  \cdot N)  \td \text{vol} \geq 0 \; ,\label{intmazur}
\ee
where $n$ is the unit outward normal to the boundary.
Furthermore, the r.h.s vanishes iff $N=0$ which is equivalent to $\mathring{J}=0$ and from above this is equivalent to $\Psi$ being a constant; therefore  if $\Psi$ vanishes at a point, it vanishes everywhere establishing $\tilde{\Phi}=\Phi$ and hence uniqueness. Thus the uniqueness problem reduces to showing that the boundary integral on the l.h.s of (\ref{intmazur}) vanishes. \vskip.3cm

\begin{myproof}{Theorem}{\ref{th:unique}}
Consider (\ref{geq}) which takes the above form with $\Phi=g$ and the corresponding integrated Mazur identity (\ref{intmazur}). The integral over the boundary $\partial \hat{M}$ receives two contributions, one from the semi-circle at infinity and one from the axes.  We will prove that both of these give zero contribution and hence by the above remarks establish the uniqueness theorem.

First consider the contribution from the semi-circle at infinity. From our asymptotic analysis, summarised in Proposition \ref{prop:AF}, we find that the difference matrix is 
\be
\Psi= \tilde{g} g^{-1}- \text{I}= \begin{pmatrix} \frac{-2\tilde{m}+2m}{r}+ O_2(r^{-2}) & \frac{2\tilde{j}-2j}{r^3}+ O_2(r^{-4}) \\ \frac{(2\tilde{j}-2j)\sin^2\theta}{r}+O_2(r^{-2}) & \frac{-2m+2\tilde{m}}{r}+ O_2(r^{-2}) \end{pmatrix}  \; .
\ee
Therefore, since $n= \partial_r$ we get $ \partial_n \text{Tr} \Psi= O(r^{-3})$ and using $\text{dvol} = r \td \theta$ we deduce that the contribution from the boundary integral at infinity on the l.h.s. of (\ref{intmazur})  vanishes. 

Now consider the contribution from the axes $\mathcal{A}$ which reduces to
\be
\sum_{A=1}^{N+1} \int_{I_A} \rho \partial_\rho \text{Tr}\Psi  \td z  \; .
\ee
Suppose the two solutions $g, \tilde{g}$ have the same rod structure. Then,  since the two solutions have the same rod vectors we may parameterise $g, \tilde{g}$ near $I_A$ relative to the same adapted basis $(u_A, v_A)$ so that the Gram matrix takes the form (\ref{axisg})  and we distinguish the quantities $h, w$ using tildes in the obvious manner. Then  a computation reveals
\be
\text{Tr} \Psi= \frac{ (h- \tilde{h})^2+ \rho^2 (w- \tilde{w})^2}{h \tilde{h}}  \; ,  \label{TrPsi}
\ee
which is positive definite (note that $L$ and $L_A$ drops out). Now, since smoothness of the axis requires $h>0$ and $w$ to be smooth functions of $\rho^2$ at $\rho=0$ it immediately follows that $\rho \partial_\rho \text{Tr} \Psi=0$ on  every rod $I_A$.  Therefore the contribution from the axis vanishes.
\end{myproof}
\vskip.3cm

We remark that the above uniqueness theorem does not require one to fix the asymptotic invariants $m,j$ of the two solutions to be the same. This is in contrast to the black hole uniqueness theorem which requires one to fix the angular momentum.  Indeed, it is  a consequence of the above theorem that the asymptotic invariants $m,j$ are fixed in terms of the rod structure and the  periodicities $\beta, \Omega$ that define the asymptotic geometry. 

For example, if one considers the euclidean Kerr rod structure, the free parameters appearing in the uniqueness theorem are $\beta, \Omega$ and the rod length $\ell_2$ of $I_2$. Then the absence of a conical singularity at $I_2$ fixes one parameter leaving a 2-parameter family corresponding to the euclidean Kerr solution. Similarly, the Chen-Teo instanton is also a 2-parameter family. Generalising this argument we may conjecture the dimension of the moduli space of regular solutions. The above uniqueness theorem shows that a solution with $N-1$ finite rods and  given rod vectors can be specified by $\beta, \Omega, \ell_2, \dots, \ell_N$, which comprises $N+1$ moduli.  Then, we expect that removal of the potential conical singularities at the finite rods gives $N-1$ constraints on the parameters. Therefore, given any rod structure, {\it if} regular solutions exist we expect the moduli space to be 2-dimensional.   

Of course, the above does not address the existence question: for what rod structures do solutions actually exist?  We will turn to this question  next. 

\subsection{Harmonic map formulation}

We now show that (\ref{geq}) can be equivalently written as a harmonic map.
Let 
\be
\Phi := \rho^{-1} g
\ee
which is a real, symmetric, positive-definite  matrix with $\det \Phi=1$.  Then $\Phi^{-1} \td \Phi = g^{-1} \td g -\rho^{-1}\td \rho$
and therefore (\ref{geq}) implies
\be
\td \hat{\star}( \rho \Phi^{-1} \td \Phi)=0  
\ee
since $\rho$ is a harmonic function.  Observe that the Mazur difference $\tilde{\Phi}\Phi^{-1}-I = \tilde{g} g^{-1} -I$ so the proof of Theorem \ref{th:unique} given in the previous section works in an identical fashion in terms of $\Phi$.

It is convenient to rewrite the above equation as
\be
\nabla \cdot (\Phi^{-1} \nabla \Phi)=0 \; ,  \label{HM}
\ee
where $\nabla$ is the standard derivative on euclidean space $\mathbb{R}^3$ and $\Phi$ is an axisymmetric function on $\mathbb{R}^3 \backslash \Gamma$ (to see this note that $(\rho, z)$ can be identified with cylindrical coordinates). Here $\Gamma$ is the $z$-axis and must be removed since $\Phi$ is singular there (i.e. at $\rho=0$).  Thus $\Phi: \mathbb{R}^3\backslash \Gamma \to N$ is a harmonic map, where the target $N$ is given by the Riemannian symmetric space $H^2 \cong SL(2, \mathbb{R})/SO(2)$.  To see this, one can introduce the standard parameterisation 
\be
\Phi= \begin{pmatrix} X+ X^{-1} Y^2 & X^{-1} Y \\  X^{-1} Y & X^{-1} \end{pmatrix}  \; ,
\ee
for $X>0$, which is explicitly unimodular, so the target space metric
\begin{equation}
G= \frac{1}{2} \text{Tr} [\Phi^{-1} \td \Phi \cdot \Phi^{-1} \td \Phi] = \frac{ \td X^2 + \td Y^2}{X^2}
\end{equation} 
is the standard metric on the Poincar\'e half-plane model of hyperbolic space.

For later reference, the tension of any map $\Phi: \mathbb{R}^3\backslash \Gamma \to N$ is encoded by the matrix
\be
\tau(\Phi):= \nabla \cdot (\Phi^{-1} \nabla \Phi)
\ee
and hence $\Phi$ is harmonic iff $\tau(\Phi)=0$.  Its norm with respect to the target space metric is given by
\begin{equation}
|\tau|^2 = \frac{1}{2} \text{Tr} [ (\nabla \cdot (\Phi^{-1} \nabla \Phi)  )^2] \; .  \label{tensionsq}
\end{equation}
We wish to address the existence of solutions to (\ref{geq}). The above shows this is equivalent to the existence of a harmonic map (\ref{HM}) with prescribed boundary conditions on $\Gamma$. Fortunately, Weinstein has established the following existence result for harmonic maps with target spaces $N$ that in particular include $N= SL(2, \mathbb{R})/SO(2)$.

\begin{theorem}[Weinstein~\cite{Weinstein:2019zrh}] 
\label{th:weinstein} Given a  map $\Phi_0: \mathbb{R}^3\backslash \Gamma \to N$ with $|\tau(\Phi_0)|$  bounded that decays at infinity 
sufficiently fast (called a { model map}), there exists unique harmonic map $\Phi : \mathbb{R}^3\backslash \Gamma \to N$ such that $\text{dist}_{N}(\Phi, \Phi_0)$ is  bounded and decays to zero at infinity ($\Phi$ is said to be {asymptotic} to $\Phi_0$).
\end{theorem}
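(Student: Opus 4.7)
The plan is to prove existence by a variational argument combined with an approximation procedure, exploiting the fact that the target $N = SL(2,\mathbb{R})/SO(2) = H^2$ is a simply-connected Hadamard manifold of strictly negative curvature. The immediate obstruction is that the Dirichlet energy is infinite on the natural class of competitors: the model map $\Phi_0$ has prescribed singular behavior along $\Gamma$ and any asymptotic map inherits this singularity. This forces one to work with a renormalized functional rather than the bare Dirichlet energy.

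First, I would introduce a relative energy
\[
E_{\text{ren}}(\Phi) = \frac{1}{2}\int_{\mathbb{R}^3\setminus\Gamma}\bigl(|\nabla \Phi|_G^2 - |\nabla \Phi_0|_G^2\bigr)\, \td V,
\]
and verify it is finite on the class of maps $\Phi$ with $\text{dist}_N(\Phi,\Phi_0)$ bounded and decaying at infinity. Formal integration by parts rewrites the difference in terms of a non-negative Bochner-type contribution (using the negative curvature of $H^2$) plus a pairing of $\tau(\Phi_0)$ against $\text{dist}_N(\Phi,\Phi_0)$; the hypotheses that $|\tau(\Phi_0)|$ be bounded and decay sufficiently fast are exactly what is needed to control the latter. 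Critical points of $E_{\text{ren}}$ are harmonic maps, so the PDE becomes an Euler-Lagrange equation for a coercive functional on a convex space (geodesic convexity of $N$).

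Second, to produce an actual minimizer I would approximate by Dirichlet problems on bounded cylindrical domains $\Omega_n$ exhausting $\mathbb{R}^3\setminus\Gamma$, with boundary data $\Phi_0\vert_{\partial \Omega_n}$. Because the target is a Hadamard space, the Hildebrandt-Kaul-Widman theory provides a unique harmonic map $\Phi_n : \Omega_n \to N$ for each finite problem. The crucial a priori estimate is a uniform $L^\infty$ bound on $\text{dist}_N(\Phi_n, \Phi_0)$, obtained by noting that $u_n := \tfrac{1}{2}\,\text{dist}_N(\Phi_n, \Phi_0)^2$ satisfies a subharmonic-type inequality $\Delta u_n \geq -C|\tau(\Phi_0)|\sqrt{u_n}$, and constructing a radial barrier built from the decay profile of $|\tau(\Phi_0)|$ that dominates $u_n$ uniformly in $n$. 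Interior regularity of Schoen-Uhlenbeck type then extracts a smooth subsequential limit $\Phi$ on $\mathbb{R}^3\setminus\Gamma$ that is harmonic and still asymptotic to $\Phi_0$.

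Uniqueness is considerably cleaner: if $\Phi_1, \Phi_2$ are harmonic maps both asymptotic to $\Phi_0$, then $u := \text{dist}_N(\Phi_1, \Phi_2)^2$ is subharmonic on $\mathbb{R}^3\setminus\Gamma$ (by the standard Eells-Sampson computation in negative curvature), is bounded, and tends to $0$ at infinity and along $\Gamma$, so the maximum principle forces $u \equiv 0$. The step I expect to be hardest is the uniform $L^\infty$ bound on the approximating sequence near the singular axis $\Gamma$: the model map blows up on $\Gamma$ in the precise manner dictated by the rod structure, and the barrier arguments have to be carried out in cylindrical collars of $\Gamma$ with care taken to respect both this prescribed axis behavior and the decay of $|\tau(\Phi_0)|$ at infinity simultaneously, ensuring the competing limits do not produce a barrier that blows up at either end.
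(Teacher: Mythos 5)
The paper does not prove this statement at all: it is imported verbatim as an external result of Weinstein (the cited reference \cite{Weinstein:2019zrh}), and the authors' own work begins only afterwards, with the construction of the model map in Proposition \ref{prop:modelmap}. So there is no in-paper proof to compare against; what you have written is a reconstruction of Weinstein's argument.

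As such a reconstruction it is essentially faithful to how the theorem is actually proved in the literature: exhaustion of $\mathbb{R}^3\setminus\Gamma$ by bounded domains, solvability of the finite Dirichlet problems because the target is an NPC (Hadamard) space, a uniform a priori bound on $\text{dist}_N(\Phi_n,\Phi_0)$ coming from the tension of the model map, a compactness/regularity step to pass to the limit, and uniqueness from subharmonicity of the distance between two harmonic maps plus the maximum principle. One technical simplification worth flagging: the standard route works with the distance itself rather than its square. For $\Phi$ harmonic and $\Phi_0$ of tension $\tau(\Phi_0)$ into an NPC target, one has the \emph{linear} differential inequality $\Delta\, \text{dist}_N(\Phi,\Phi_0) \geq -|\tau(\Phi_0)|$ in the weak sense, so the barrier is simply a bounded, decaying solution of $\Delta w = -|\tau(\Phi_0)|$ on $\mathbb{R}^3$, whose existence is exactly what the hypothesis ``$|\tau(\Phi_0)|$ bounded and decaying sufficiently fast'' guarantees. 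Your version with $u_n=\tfrac12\,\text{dist}^2$ and $\Delta u_n\geq -C|\tau(\Phi_0)|\sqrt{u_n}$ is workable but makes the comparison argument nonlinear for no gain. The renormalized-energy functional you introduce in the first step is likewise a legitimate alternative device (Weinstein uses a reduced energy in some versions of the argument), but it is not needed once the exhaustion-plus-barrier scheme is in place; presenting both without choosing one leaves the logical spine of the existence proof slightly ambiguous. None of this affects the substance: the outline is correct, and the step you single out as hardest (uniformity of the distance bound up to the singular axis) is indeed where the real work lies.
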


Therefore the existence problem reduces to exhibiting a model map with the required asymptotics. This is achieved as follows.

\begin{prop} \label{prop:modelmap}
There exists an AF model map which exhibits any admissible rod structure.
\end{prop}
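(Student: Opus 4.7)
The plan is to construct $\Phi_0$ by patching together explicit local models on a cover of $\mathbb{R}^3\setminus\Gamma$ adapted to the rod structure, then verify that the resulting map is AF and has bounded tension. The building blocks will be inspired by the observation that $\log\rho$ is harmonic on $\mathbb{R}^3\setminus\Gamma$, so simple diagonal models of the form $\Phi = \mathrm{diag}(\rho^{-1}h,\rho h^{-1})$, with $h$ a carefully chosen positive axisymmetric function, automatically have tension that reduces to $\nabla\cdot(h^{-1}\nabla h)$. Choosing $h$ built from the harmonic Newton potentials $r_A = \sqrt{\rho^2+(z-z_A)^2}$ therefore makes these blocks harmonic or nearly so.

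Concretely, first I would treat each rod $I_A$ separately. Since $\underline{v}_A\in\mathbb{Z}^2$ is primitive there exists $M_A\in GL(2,\mathbb{Z})$ with $M_A\underline{v}_A=(0,1)^T$; pulling back by $M_A$, the rod-adapted model matches the template (\ref{axisg}) if we take
\begin{equation*}
\Phi_A^{(0)} = M_A^{-1}\,\mathrm{diag}\!\left(\rho^{-1} h_A,\ \rho\, h_A^{-1}\right)(M_A^{-1})^T,
\end{equation*}
with $h_A$ a smooth positive function of $(\rho^2,z)$ equalling $(z-z_{A-1})(z_A-z)/\hat\beta$ on $I_A$ and matching the semi-infinite flat model on the unbounded rods. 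On a small ball around each corner $z_A$, admissibility $\det(\underline{v}_A,\underline{v}_{A+1})=\pm 1$ furnishes $P_A\in GL(2,\mathbb{Z})$ mapping $(\underline v_A,\underline v_{A+1})$ to the standard basis $((1,0),(0,1))$; in that basis the corner geometrically looks like the axis corner of flat $\mathbb{R}^4$, and I take the corresponding flat-space $\Phi$ as the corner model. Finally I would assemble these pieces using a partition of unity subordinate to tubular neighbourhoods of the rods, small balls around the corners, and the complement away from $\Gamma$ and from infinity, where we interpolate smoothly to the flat-space $\Phi_\flat$ dictated by Definition \ref{def:AF} via (\ref{g0}) and (\ref{L}).

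The hard part is the estimate on $|\tau(\Phi_0)|$. Inside each local patch the model is either harmonic or harmonic modulo terms of compact support, so $\tau$ is trivially controlled there; the only potentially dangerous contributions come from transition regions where derivatives of the cutoffs act and where two adjacent local models in different adapted bases are averaged. The key observation is that on each such overlap the two competing local models agree to leading order on the rod (both realize the same rank-1 Gram matrix with the same kernel) and the admissibility condition ensures the connecting $GL(2,\mathbb{Z})$ transformation induces an isometry of the hyperbolic target, so the target-space distance between them is bounded on the overlap. Combined with the fact that away from the axis the models are smooth and the norm on $SL(2,\mathbb{R})/SO(2)$ of the first derivatives decays like $r^{-2}$ at infinity (as follows from the asymptotic expansion in Proposition \ref{prop:AF}), this yields $|\tau(\Phi_0)|$ uniformly bounded with the decay required by Theorem \ref{th:weinstein}. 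These are precisely the estimates carried out in the black hole context by Weinstein \cite{wein, wein2, Weinstein:2019zrh} and by Khuri--Weinstein--Yamada \cite{Khuri:2017xsc}, and they transplant to the Riemannian setting essentially verbatim since only the target space $SL(2,\mathbb{R})/SO(2)$ and the singular data on $\Gamma$ enter.
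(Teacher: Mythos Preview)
Your overall architecture—local models adapted to rods and corners, glued together, with a tension check on the transition regions—is the same as the paper's. However, two steps in your proposal are not actually carried out, and they are precisely where the work lies.

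First, ``partition of unity'' is not a well-defined operation for maps into the nonlinear target $SL(2,\mathbb{R})/SO(2)$: a convex combination $\sum_i\chi_i\Phi_i$ of symmetric positive-definite matrices is again symmetric positive-definite, but will not have unit determinant, so you leave the target. You must specify the interpolation. The paper does this by writing $g=N(z)\,D(\rho,z)\,N(z)^T$ on each transition strip $\mathcal{T}_A$, where $N(z)$ is a $z$-dependent matrix with $\det N=\hat\beta$ interpolating between the constant factors on the adjacent regions, and $D$ is diagonal with entries that are geometric means $(\mu_{A-1}^\mp)^{\lambda(z)}(\mu_A^\pm)^{1-\lambda(z)}$ with a smooth $\lambda(z)\in[0,1]$. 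This keeps $\det g=\hat\beta^2\rho^2$ identically, so the map remains in the target throughout, and the admissibility condition (\ref{admissible}) is used precisely to arrange $\det N=\hat\beta$.

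Second, your tension bound on the overlaps is asserted, not proved. That the two local models are at bounded target-distance on an overlap (which is all your $GL(2,\mathbb{Z})$-isometry observation gives) does not by itself control $|\tau|$, which involves second derivatives. In the paper the factored form $g=NDN^T$ with $N=N(z)$ makes the computation tractable: one expands $\nabla\cdot(g^{-1}\nabla g)$ term by term, and each term is bounded on $\mathcal{T}_A$ because $N,N^{-1}$ are smooth in $z$ alone, $D,D^{-1}$ and $\partial_z\log D$ are bounded (the $\mu^\pm$ are bounded away from zero on $\mathcal{T}_A$), and $\Delta\log D$ is bounded since $\Delta\log\mu_A^\pm=0$. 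The transition regions adjoining the flat model require a separate explicit check. Your appeal to \cite{wein,wein2,Khuri:2017xsc} is not a substitute here: those references perform exactly this kind of explicit estimate rather than the distance-to-tension inference you sketch. Finally, invoking Proposition~\ref{prop:AF} for the decay of the model map's derivatives is circular—that proposition concerns genuine solutions, whereas here you must arrange the decay by construction, most simply by taking the model map equal to the flat solution outside a large ball, as the paper does.
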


\begin{proof}
Consider the general case with $N$ corners, so that there are $N+1$ rods $I_A$.  We will  divide the $(\rho, z)$ half-plane into several regions.
On the interior of each rod we define a transition region $T_A \subset  I_A$ and thicken them into regions $\mathcal{T}_A = T_A \times [0,\rho_0)$ for some $\rho_0>0$.  Next, define  $\mathcal{R}$ to be the region  $|z| < R$ and $0\leq \rho < \rho_0$.  Then $\mathcal{R} \setminus \mathcal{T}_A$ leaves $N+2$ regions $\mathcal{S}_A$ separated by the $N+1$ transition regions $\mathcal{T}_A$.  Note that for $A=2, \dots, {N+1}$ the regions $\mathcal{S}_A$ include  part of  the rods $I_{A-1}$ and $I_A$ and the corner $z_{A-1}$ that separates them, whereas $\mathcal{S}_1$ and $\mathcal{S}_{N+2}$ sits over $I_1$ and $I_{N+1}$ respectively.  Note also that the transition regions $\mathcal{T}_A$ do not contain any of the corner points $z_A$. We also define a region near infinity $C_R$ by  $\rho^2+z^2 > R^2$. See Figure \ref{fig:model} for a depiction of these regions.

\begin{figure}[h!]
\centering
{
\begin{tikzpicture}[scale=1.5, every node/.style={scale=0.6}]
\fill[ fill=black, opacity=0,very thick](-4,4)--(-4,0)--(4,0)--(4,4);
\draw[black,thick](-4.2,0)--(4.2,0)node[black,font=\large,right=.2cm]{\color{black}{$z$}};
\draw[black,thick](0,0)--(0,4.2)node[black,font=\large,above=.2cm]{\color{black}{$\rho$}};
\draw[black,fill=black] (-1.5,0) circle [radius=.07] node[black,below=.4cm]{$z_1$};
\draw[black] (-1.5,1.2) node[black,above=.4cm]{};
\draw[black] (-1.5,3.6) node[black,above=.4cm]{$C_R$};
\draw[black,fill=black] (-3.8,0) circle [radius=.03] node[black,below=.4cm]{$-R$};
\draw[black,fill=black] (3.8,0) circle [radius=.03] node[black,below=.4cm]{$R$};
\draw[gray,dashed](-3.8,1.2)--(3.8,1.2);
\draw[gray](-3.8,0)--(-3.8,1.2);
\draw[gray](3.8,0)--(3.8,1.2);
\draw[black] (-3.4,0) node[black,above=.8cm]{$\mathcal{S}_{1}$};
\draw[gray](-3,0)--(-3,1.2);
\draw[black] (-2.6,0) node[black,above=.8cm]{$\mathcal{T}_{1}$};
\draw[gray](-2.2,0)--(-2.2,1.2);
\draw[black] (-1.5,0) node[black,above=.8cm]{$\mathcal{S}_{2}$};
\draw[gray](-0.5,0)--(-0.5,1.2);
\draw[black] (-0.25,0) node[black,above=.8cm]{$\mathcal{T}_{2}$};
\draw[gray](0.5,0)--(0.5,1.2);
\draw[black] (1.35,0) node[black,above=.8cm]{$\mathcal{S}_{3}$};
\draw[black] (1.35,1.2) node[black,above=.4cm]{};
\draw[gray](2.2,0)--(2.2,1.2);
\draw[black] (2.6,0) node[black,above=.8cm]{$\mathcal{T}_{3}$};
\draw[gray](3,0)--(3,1.2);
\draw[black] (3.4,0) node[black,above=.8cm]{$\mathcal{S}_{4}$};
\draw[black](0,0) node[black,below=.4cm]{\color{black}{}};
\draw[black,fill=black] (1.7,0) circle [radius=.07] node[black,below=.4cm]{$z_{2}$};
\draw[black] (3.8,0) arc (0:180:3.8cm);
\end{tikzpicture}}
\caption{Regions for the model map in the case $N=2$.\label{fig:model} }
\end{figure}
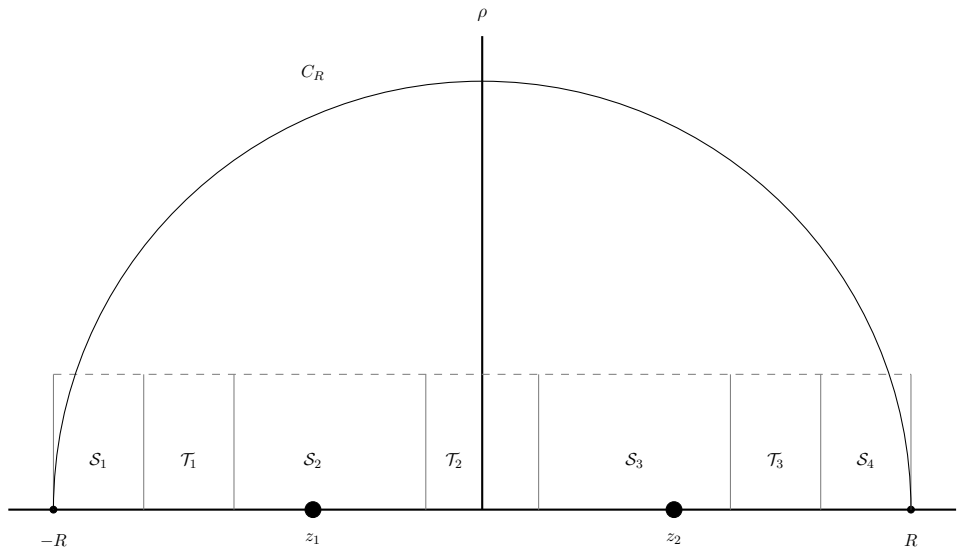
 We will define the model map $\Phi_0$ by specifying it on each of these regions. On the  remaining compact region $\rho^2+z^2\leq R^2$ and $\rho\geq \rho_0$ we take $\Phi_0$ to be any smooth extension so that this defines the model map everywhere.   It will be useful to define the following functions  
\be
\mu_A^\pm := \sqrt{\rho^2+ (z-z_A)^2} \mp (z-z_A)
\ee
which in particular have the property that $\mu_A^+=0$ for $\rho=0, z>z_A$ and $\mu_A^-=0$ for $\rho=0, z<z_A$.   It is also convenient to introduce the following constants
\be
\epsilon_{A-1}:= \det (v_{A-1}, v_A)   \label{epA}
\ee
for $A=2, \dots, N+1$ which by the admissibility condition (\ref{admissible}) must take unit values $\pm 1$.

We are now ready to define our model map.
It is convenient to work with $g$ instead of $\Phi$ and in the $2\pi$-periodic basis $(\partial_\psi, \partial_\chi)$ defined in (\ref{basis}). 
First, on $C_R$, $\mathcal{S}_1$ and $\mathcal{S}_{N+2}$, we take flat space
\be\label{modelmap:flat}
g= L \begin{pmatrix} 1 & 0 \\ 0 & \rho^2 \end{pmatrix} L^T
\ee
where $L$ is given by (\ref{L}) (recall  $\det g= \hat{\beta}^2 \rho^2$ in this basis).  Note that $\text{ker}(g)_{\rho=0}$ is spanned by $(0,1)$ as it must be.

On $\mathcal{S}_2$ take
\be
g= \begin{pmatrix} \hat{\beta} & \epsilon_1 v_2^2 \\ 0 & -\epsilon_1 v_2^1\end{pmatrix} \begin{pmatrix} \mu_1^+ & 0 \\ 0 & \mu_1^- \end{pmatrix} \begin{pmatrix} \hat{\beta} & 0 \\ \epsilon_1 v_2^2 & -\epsilon_1 v_2^1\end{pmatrix} 
\ee
where $\epsilon_1$ is defined by (\ref{epA}). Hence if $z<z_1$ the kernel of $g_{\rho=0}$ is spanned by $\underline{v}_1$ and if $z>z_1$ it is spanned by $\underline{v}_2$. Also note that $\det g = \hat{\beta}^2 \rho^2$ since the admissibility condition $\pm 1=\epsilon_1=- v_2^1$ gives  $\epsilon_1 v_2^1=-1$. On the transition region $\mathcal{T}_1$ which separates $\mathcal{S}_1$ and $\mathcal{S}_2$ we set
\be
g= \begin{pmatrix} \hat{\beta} & b(z) \\ 0 & 1 \end{pmatrix} \begin{pmatrix} (\mu_1^+)^{\lambda(z)} & 0 \\ 0 &  \rho^{2(1-\lambda(z))}(\mu_1^-)^{\lambda(z)} \end{pmatrix} \begin{pmatrix} \hat{\beta} & 0 \\ b(z) & 1\end{pmatrix} 
\ee
where $b(z)$ smoothly interpolates between $\hat{\beta} \Omega$ for $z\in {S}_1$ and $\epsilon_1 v_2^2$ for $z\in {S}_2$, whereas $\lambda(z)$   interpolates from $0$ to $1$ and $0\leq \lambda(z)\leq 1$.  Note the determinant is $\det g = \hat{\beta}^2 \rho^2$ and the kernel of $g_{\rho=0}$ on $\mathcal{T}_1$ is indeed spanned by $\underline{v}_1$.

On any other given region $\mathcal{S}_A$, $A=2, \dots, N+1$, take
\begin{equation}
g = \tilde{M}_{A-1} \begin{pmatrix} \mu_{A-1}^- & 0 \\  0 & \mu_{A-1}^+ \end{pmatrix} (\tilde{M}_{A-1})^T , \qquad \tilde{M}_{A-1} = \begin{pmatrix} v^2_{A}& -\epsilon_{A-1}\hat{\beta} v^2_{A-1} \\ -v^1_{A} &   \epsilon_{A-1} \hat{\beta} v^1_{A-1} \end{pmatrix}
\end{equation} 
which can be equivalently written as
\begin{equation}
g =  M_{A-1} \begin{pmatrix} \mu_{A-1}^+ & 0 \\  0 & \mu_{A-1}^- \end{pmatrix} (M_{A-1})^T , \qquad M_{A-1} = \begin{pmatrix}  \hat{\beta} v^2_{A-1} & \epsilon_{A-1} v^2_{A} \\ -\hat{\beta} v^1_{A-1}  &   -\epsilon_{A-1} v^1_{A} \end{pmatrix}
\end{equation} 
where $\epsilon_{A-1}$ is defined by (\ref{epA}) 
and note that $\tilde{M}_{A-1} = M_{A-1} C_{A-1}$ with 
\begin{equation}
C_{A-1} =\epsilon_{A-1} \begin{pmatrix} 0 & -1\\ 1 & 0 \end{pmatrix}
\end{equation} 
relates these two ways of writing the metric (which just swaps the two columns and scales appropriately). Note that on $\mathcal{S}_2$ this agrees with above. Observe that if $z \in {S}_A$, 
\begin{equation}
\text{ker}(g)_{\rho =0} = \begin{cases} \text{span}(\underline{v}_{A-1}) & z < z_{A-1} \\ \text{span}(\underline{v}_A) & z > z_{A-1} \end{cases}
\end{equation} so these indeed exhibit the required rod structure in these regions. This follows from
\be
M_{A-1}^{T} \begin{pmatrix} v^1_A \\ v^2_A \end{pmatrix}= -\hat{\beta} \epsilon_{A-1} \begin{pmatrix} 1 \\ 0 \end{pmatrix}  \; ,\qquad M_{A-1}^{T} \begin{pmatrix} v^1_{A-1} \\ v^2_{A-1} \end{pmatrix} =\begin{pmatrix} 0 \\ 1 \end{pmatrix}  \; .
\ee
 Observe that 
 \be
 \det M_A=\hat\beta
 \ee
  in view of the admissibility condition $\epsilon_{A-1}=\pm 1$ and hence $\det g =\hat{\beta}^2 \rho^2$.
  
On the transition regions $\mathcal{T}_A$, $A=2, \dots, N$, which separate $\mathcal{S}_A$ and $\mathcal{S}_{A+1}$, take
\begin{equation}\label{g_T}
g = N_A(z) \begin{pmatrix}  (\mu^-_{A-1})^{\lambda(z)} (\mu_A^+)^{1 - \lambda(z)} & 0 \\0 & (\mu^+_{A-1})^{\lambda(z)} (\mu_A^-)^{1 - \lambda(z)} \end{pmatrix} N_A (z)^T, 
\end{equation} where
\begin{equation}
N_A (z) = \begin{pmatrix} \gamma(z) v^2_A & \alpha^2_A(z) \\  -\gamma(z) v^1_A  & -\alpha^1_A(z)\end{pmatrix}
\end{equation} and we have defined smooth  functions $\alpha_A^i(z), \lambda(z)$ such that  $0\leq \lambda(z)\leq 1$ with $\lambda(z)$ varying smoothly from $1$ for $z \in {S}_A$ to $0$ for $z \in {S}_{A+1}$ and $\alpha^i_A(z)$ smoothly varies such that
\begin{equation}
\alpha^i_A(z) = \begin{cases}-\epsilon_{A-1} \hat{\beta}  v^i_{A-1} \qquad z \in {S}_A \\ \epsilon_A v^i_{A+1} \qquad z \in S_{A+1} \end{cases} 
\end{equation} 
and $\gamma(z)>0$ varies from $1$ for $z\in S_A$  to $\hat{\beta}$ for $z\in S_{A+1}$, and  $\det{N}_A \neq 0$ for $z \in {T}_A$.  For $z \in {T}_A$, we have
\be
g|_{\rho =0} = N_A(z)\begin{pmatrix} 4 |z-z_{A-1}|^{\lambda(z) }| z-z_A|^{1-\lambda(z)} & 0 \\ 0 & 0 \end{pmatrix} N_A(z)^{T}
\ee
and hence 
\begin{equation}
\text{ker} (g)_{\rho =0} = \text{span}(\underline{v}_A) ,
\end{equation} since 
\begin{equation}\label{N_A}
N_A(z)^T \begin{pmatrix} v_A^1 \\ v_A^2 \end{pmatrix} =\det(\underline{v}_A, \underline{\alpha}_A) \begin{pmatrix} 0 \\ 1 \end{pmatrix}.
\end{equation}
Therefore we have the correct rod structure on the transition region $T_A$.  Note that since $\det g= \hat{\beta}^2\rho^2$ we require $\det N_A=\pm \hat{\beta}$ which can be achieved by taking
\be\label{alpha1}
\alpha^i_A(z)=  \hat{\beta}\gamma(z)^{-1} \left( -\epsilon_{A-1} \lambda(z) v_{A-1}^i +\epsilon_A v_{A+1}^i (1-\lambda(z)) \right)
\ee
where  recall $\lambda(z)$ smoothly interpolates from $1$ on ${S}_A$ to $0$ on ${S}_{A+1}$. With this choice of $\alpha^i_{A}(z)$, 
\begin{equation}
\det N_A = \hat{\beta}( \lambda (\epsilon_{A-1})^2+(1-\lambda) (\epsilon_A)^2)= \hat{\beta}  \; ,
\end{equation} 
again using the admissibility condition, as claimed.

Finally, we must specify the final transition region $\mathcal{T}_{N+1}$ which separates $\mathcal{S}_{N+1}$ and $\mathcal{S}_{N+2}$. Recall on $\mathcal{S}_{N+2}$ we take the solution at infinity \eqref{modelmap:flat}.  On $\mathcal{S}_{N+1}$ the above gives
\be
g = \tilde{M}_{N} \begin{pmatrix} \mu_{N}^- & 0 \\  0 & \mu_{N}^+ \end{pmatrix} (\tilde{M}_{N})^T , \qquad \tilde{M}_{N} = \begin{pmatrix} 1 & -\epsilon_{N}\hat{\beta} v^2_{N} \\ 0 &    \hat{\beta}  \end{pmatrix}
\ee
where we have used $\underline{v}_{N+1}=(0,1)$ and $\epsilon_N= \det(\underline{v}_N, \underline{v}_{N+1})= v_N^1$ and $\epsilon_N=\pm1$. Thus on $\mathcal{T}_{N+1}$ we can take
\be
g = \begin{pmatrix} a(z) & b(z) \\ 0 & \hat{\beta} a(z)^{-1} \end{pmatrix}  \begin{pmatrix} (\mu^-_N)^{\lambda(z)} & 0 \\ 0  & \rho^{2(1-\lambda(z))} (\mu^+_N)^{\lambda(z)} \end{pmatrix}\begin{pmatrix} a(z) & 0 \\ b(z) & \hat{\beta} a(z)^{-1} \end{pmatrix}
\ee
where $0\leq \lambda(z)\leq 1$ goes from $1$ on $S_{N+1}$ to $0$ on $S_{N+2}$, whereas $(a(z), b(z))$ go from $(1, -\epsilon_{N}\hat{\beta} v^2_{N})$ on $S_{N+1}$ to $(\hat{\beta}, \hat{\beta} \Omega)$ on $S_{N+2}$ and $a(z)>0$. Note $\det g = \hat{\beta}^2 \rho^2$  and the kernel of $g_{\rho=0}$  is spanned by $\underline{v}_{N+1}$ as required.

The above defines a map that is manifestly AF  with any admissible rod structure. For this to be a model map we also need to verify that the tension is bounded. Specifically we require boundness of
\be
|\tau |^2 = \frac{1}{2}\text{Tr} [( \nabla\cdot (g^{-1}\nabla g))^2]
\ee
where we have used (\ref{tensionsq}) and $ \nabla\cdot (\Phi^{-1}\nabla \Phi) = \nabla\cdot (g^{-1}\nabla g)$.
First note that on the region $\rho^2+z^2\leq R^2$ and $\rho\geq \rho_0$, where we take the map to be any smooth extension, $| \tau |$ must be bounded by compactness. Next, observe that the map $g$ is a solution to (\ref{geq}) on the region $C_R, \mathcal{S}_1, \mathcal{S}_{N+2}$ (it is just flat space), and since $\Delta \log \mu_A^\pm=0$ where $\Delta:=\nabla \cdot \nabla$ is the Laplacian on $\mathbb{R}^3$, it is also a solution on all the regions $\mathcal{S}_{A}$, so the tension vanishes on these regions. Therefore, it remains to check boundedness on the transition regions $\mathcal{T}_A$.

In order to prove boundedness of $|\tau |^2$, it is clearly sufficient to show that  every component of the tension matrix $\tau= \nabla\cdot (g^{-1}\nabla g)$ is bounded.  On every transition region we have $g= N D N^T$ (dropping labels for clarity) where $D$ is diagonal, and a computation gives
 \bea
 \tau &=& \nabla (N^{-T} D^{-1}) \cdot (N^{-1} \nabla N) (D N^T) + (N^{-T} D^{-1})\nabla\cdot(N^{-1} \nabla N) DN^T  \nonumber \\ &&+  (N^{-T} D^{-1}) (N^{-1} \nabla N)\cdot \nabla (D N^T) + \nabla N^{-T} \cdot (D^{-1} \nabla D) N^T \nonumber \\ &&+ N^{-T} \nabla\cdot (D^{-1} \nabla D) N^T + N^{-T}  (D^{-1} \nabla D) \cdot \nabla N^T+ \nabla \cdot( N^{-T} \nabla N^T)  \; .
 \eea
 We now consider each of these seven terms.  The last (seventh) term is clearly bounded since $N$ and $N^{-1}$ are functions of only $z$ which are smooth everywhere.  For the fifth term note that since $D$ is diagonal
 \be
\nabla\cdot( D^{-1} \nabla D)=\Delta \log D
\ee
and on the generic transition regions $\mathcal{T}_A, A=2, \dots, N$,
\be
\log D_A = \begin{pmatrix} \lambda(z) \log \mu_{A-1}^- +(1-\lambda(z)) \log\mu_A^+ & 0 \\ 0&  \lambda(z) \log \mu_{A-1}^+ +(1-\lambda(z)) \log\mu_A^- \end{pmatrix}
\ee 
where (using $\Delta \log \mu_A^\pm=0$),
\be
\Delta  (\lambda(z) \log \mu_{A-1}^\mp +(1-\lambda(z)) \log\mu_A^\pm) = \lambda'' ( \log \mu_{A-1}^\mp -   \log\mu_A^\pm) + \lambda' \left( \frac{\partial_z \mu_{A-1}^\mp}{\mu_{A-1}^\mp} -  \frac{\partial_z \mu_{A}^\pm}{\mu_{A}^\pm}  \right)
\ee
are clearly bounded since $T_A \subset I_A$ are bounded away from the endpoints $z_{A-1}, z_A$.  Therefore, the 5th term is bounded.  The 4th and 6th terms are also bounded since $N$ only depends on $z$ and $D^{-1} \partial_z D= \partial_z \log D$ is bounded since 
\be
\partial_z (\lambda(z) \log \mu_{A-1}^\mp +(1-\lambda(z)) \log\mu_A^\pm) = \lambda' ( \log \mu_{A-1}^\mp -   \log\mu_A^\pm) + \lambda  \frac{\partial_z \mu_{A-1}^\mp}{\mu_{A-1}^\mp} +(1-\lambda)  \frac{\partial_z \mu_{A}^\pm}{\mu_{A}^\pm}  
\ee
is bounded from our previous comments. The 2nd term is also clearly bounded since in particular $D$ and $D^{-1}$ are on the transition regions (recall $\mu_A^\pm$ and $\mu_{A-1}^\pm$ are bounded away from zero on $\mathcal{T}_A$). Finally, the 1st and 3rd terms are bounded since  $N$ depends only on $z$ and $\partial_z (D N^T)$ and $\partial_z (D N^T)^{-1}$ are bounded since $\partial_z D, \partial_z D^{-1}$ are by the above comments.

It remains to check boundedness of the tension on the first and last transition regions $\mathcal{T}_1$ and $\mathcal{T}_{N+1}$. The above argument does not work. Instead, by performing an explicit calculation we find that
in the transition region $\mathcal{T}_1$ one gets, in the limit $\rho \to 0$,
\begin{equation}
\begin{aligned}
\tau^1_{~1} &= -\tau^2_{~2} = (z-z_1)^{-1} \left[2 \lambda' + \lambda'' (z-z_1) \log(2(z_1 - z))\right]+O(\rho^2), \qquad \tau^1_{~2} = O(\rho^2) \\
\tau^2_{~1} & = \frac{1}{z-z_1} \left[-2\lambda b' - 4 b \lambda'\right] - 2 \log(2(z_1-z))( b \lambda')' + b''  +O(\rho^2) \; ,
\end{aligned}
\end{equation} 
which  shows that the tension is indeed bounded near the axis and hence must be bounded on the whole of $\mathcal{T}_1$. A similar calculation shows that the tension is also bounded on $\mathcal{T}_{N+1}$.
\end{proof}

 The final part of the argument requires showing that the harmonic map is also AF and exhibits the same (arbitrary) rod structure as the model map.  This is established by the following result.  

\begin{prop}
\label{prop:asymp}
If a harmonic map  $\Phi$ is asymptotic to the model map $\Phi_0$ defined by Proposition \ref{prop:modelmap},  then it has the same rod structure as $\Phi_0$ and is AF.
\end{prop}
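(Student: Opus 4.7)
The plan is to introduce a factorization of $g$ in terms of $g_0$ that decouples the potentially singular axis behavior from the bounded perturbation, and then to read off both claims from it. Since $\Phi,\Phi_0$ take values in $SL(2,\mathbb{R})/SO(2)$ viewed as positive-definite symmetric unimodular $2\times 2$ matrices, $\Lambda := \Phi_0^{-1/2}\Phi \Phi_0^{-1/2}$ is also positive-definite symmetric and unimodular, with eigenvalues $e^{\pm d(\Phi,\Phi_0)}$ that are uniformly bounded above and bounded away from zero on $\mathbb{R}^3\setminus\Gamma$ by hypothesis. Multiplying through by $\rho$ and using $g_0^{1/2}=\sqrt{\rho}\,\Phi_0^{1/2}$ one obtains
\[
g \;=\; g_0^{1/2}\, \Lambda\, g_0^{1/2}\;,
\]
which extends continuously to the axis $\Gamma$ because $g_0^{1/2}$ does.

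From this factorization the rod structure is immediate. Indeed, because $\Lambda$ is uniformly positive-definite, the quadratic form $v \mapsto v^T g v = (g_0^{1/2} v)^T \Lambda (g_0^{1/2} v)$ vanishes if and only if $g_0^{1/2} v = 0$, i.e.\ if and only if $v \in \ker g_0$. Hence $\ker g = \ker g_0$ pointwise on $\Gamma$, so $g$ has exactly the rod structure of $g_0$: rank one with kernel spanned by $v_A$ on each rod interior $I_A$, and rank zero at each corner $z_A$.

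For asymptotic flatness, the convergence $d(\Phi, \Phi_0) \to 0$ at infinity implies $\Lambda \to I$ uniformly, hence $g \to g_0$ pointwise, and on $C_R$ the model $g_0$ coincides with the flat solution \eqref{modelmap:flat}. Elliptic regularity for the harmonic map equation, applied on annular regions $r \in [R_j, 2R_j]$ with $R_j \to \infty$ after the standard rescaling, upgrades pointwise convergence to convergence in all derivatives, so that $V \to 1$, $\omega\to 0$, $\nu\to 0$ together with their derivatives. To extract the sharp $O_2(r^{-1})$ decay demanded by Definition \ref{def:AF}, I would recast the system in terms of the Ernst potential \eqref{Veqalt}--\eqref{Weq}, whose axisymmetric form \eqref{VR3eq}--\eqref{WR3eq} on $\mathbb{R}^3$ has quadratic source terms in the already decaying gradients of $V$ and $W$. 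The Beig-Simon lemmas \cite{BeigII} then produce the asymptotic expansions $V = 1 - 2m/r + O_2(r^{-2})$ and $W = -2j\cos\theta/r^2 + O_2(r^{-3})$ exactly as in the proof of Proposition \ref{prop:AF}, from which the stated decay of $\omega$ and $\nu$ follows.

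The main obstacle is therefore not the rod-structure claim, which is an immediate consequence of the factorization and uniform positivity of $\Lambda$, but the extraction of the precise $O_2(r^{-1})$ decay rates from Weinstein's bare convergence $d(\Phi, \Phi_0) \to 0$. This requires combining elliptic regularity for the harmonic map with the Beig-Simon asymptotic bootstrap already developed for Proposition \ref{prop:AF}; the delicate point is verifying that the elliptic estimates provide enough control to justify the subsequent Ernst-equation analysis uniformly up to the asymptotic end.
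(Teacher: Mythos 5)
Your proposal is correct and, for the rod--structure claim, is essentially the paper's own argument in a symmetrized wrapper: the paper works with the Mazur difference $\mathrm{Tr}(\Phi\Phi_0^{-1}-I)$ and diagonalises $g_0=ODO^T$, deriving the two-sided bounds $c^{-1}\lambda_2\leq\tilde g_{22}\leq c\lambda_2$, $\tilde g_{11}\leq c\rho^2\lambda_2^{-1}$, $\tilde g_{12}^2\leq c^2\rho^2$ and hence $\ker g=\ker g_0$ on the axis; your $\Lambda=\Phi_0^{-1/2}\Phi\Phi_0^{-1/2}$ with $\mathrm{Tr}\,\Lambda=\mathrm{Tr}(\Phi\Phi_0^{-1})$ and $\det\Lambda=1$ encodes exactly the same information, and the uniform comparability $c^{-1}v^Tg_0v\leq v^Tgv\leq c\,v^Tg_0v$ is a cleaner route to the same conclusion. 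One small caution: $\Lambda$ need not have a limit on $\Gamma$, so "$g$ extends continuously to the axis" is slightly too strong; what you actually get (and all you need) is that the two quadratic forms are uniformly comparable, so their kernels agree in the limit $\rho\to 0$ -- the paper's bounds are at the same level of precision. Where you diverge is the AF claim: the paper's proof uses the $(h,w)$ parameterisation to show only that $\mathrm{Tr}(gg_0^{-1}-I)\to 0$ forces $h\to 1$ and $\rho w\to 0$, i.e.\ leading-order convergence $g\to g_0$, and then explicitly concedes in a Remark that the sharp $O_2(r^{-1})$ decay of Definition \ref{def:AF} is \emph{not} established there. Your elliptic-regularity-plus-Beig--Simon bootstrap is therefore an attempt at more than the paper proves; the gap you honestly flag (one needs an initial quantitative decay rate before the Beig--Simon lemmas apply, and the rescaled annuli touch the axis where the map degenerates) is genuine, but it is the same gap the paper acknowledges and defers to the methods of Weinstein's two-body paper. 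Relative to the paper's own standard, your proof is complete.
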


\begin{proof}
We adapt a proof given in the context of five-dimensional stationary and biaxisymmetric black hole spacetimes~\cite{Khuri:2017xsc}. This begins by noting that uniform boundedness of $\text{dist}_N(\Phi, \Phi_0)$ is equivalent to uniform boundedness of the Mazur difference $\text{Tr}(\Phi \Phi_0^{-1}-I)$. Furthermore, it can also be shown that $\text{dist}_N(\Phi, \Phi_0) \to 0$ is equivalent to $\text{Tr} ( \Phi \Phi_0^{-1}- I) \to 0$. 

  In our case $\text{Tr}(\Phi \Phi_0^{-1})= \text{Tr}(g g_0^{-1})$, so we may work directly with the Gram matrices. Since $g_0$ is a positive-definite symmetric matrix we can diagonalise it by an orthogonal matrix $O$ and write $g_0= O D O^T$ where $D= \text{diag}(\lambda_1, \lambda_2)$.  Then
\be
\text{Tr}( g g_0^{-1}) = \text{Tr}( \tilde{g} D^{-1})= \frac{\tilde{g}_{11}}{\lambda_1}+ \frac{\tilde{g}_{22}}{\lambda_2}
\ee
where $g= O \tilde{g} O^T$.  Now, on the interior of any rod $I_A$ we have $\text{rank}(g_0)_{\rho=0}=1$ so without loss of generality we can assume $\lambda_1|_{\rho=0}=0$ and $\lambda_2|_{\rho=0}>0$. Now noting that $\rho^2= \det g= \lambda_1\lambda_2$, we deduce that boundedness $\text{Tr}( g g_0^{-1})\leq c$ implies $\tilde{g}_{11}\leq c \lambda_1 = c \rho^2 \lambda_2^{-1}$ and $\tilde{g}_{22}\leq c \lambda_2$.   Furthermore, $\rho^2= \det \tilde{g}$ implies that $\rho^2\leq \tilde{g}_{11} \tilde{g}_{22}$ and hence $\tilde{g}_{22}\geq c^{-1} \lambda_2$, as well as $\tilde{g}_{12}^2 \leq \tilde{g}_{11} \tilde{g}_{22} $ and hence $\tilde{g}_{12}^2\leq c^2 \rho^2$.    Therefore, it follows that on the axis $\rho=0$ we have
\be
\tilde{g}_{\rho=0}= \begin{pmatrix} 0 &0 \\ 0 & \tilde{g}_{22} \end{pmatrix} \; ,
\ee
where $\tilde{g}_{22}>0$. Thus, $\text{ker}(\tilde{g})=\text{ker}(D)$ and hence it follows that $g$ and $g_0$ have the same kernel at $\rho=0$.  Thus, $g$ and ${g}_0$  have identical rod structure.

To establish AF it is convenient to use the following parameterisation for the Gram matrix with respect to the basis $(\partial_\tau, \partial_\phi)$ ,
\be
g = \begin{pmatrix}  h+ \rho^2 h^{-1} w^2 & \rho^2 h^{-1} w \\ \rho^2 h^{-1} w & \rho^2 h^{-1}  \end{pmatrix} 
\ee
and similarly for the model map (this is just like (\ref{axisg}) with the factors of $L$ removed).  
This gives
\be
\text{Tr} ( g g_0^{-1}- I) = \frac{(h-h_0)^2+ \rho^2 (w-w_0)^2}{h h_0} \; .
\ee
In fact, in this parameterisation our model map for $r>R$ is simply $h_0=1$ and $w_0=0$.  Therefore, since $\text{Tr} (g g_0^{-1}- I) \to 0$ as $r \to \infty$, we deduce that $h\to 1$ and $\rho w \to 0$. This  shows $g_{\tau\tau}\to 1$ and $g_{\phi\phi}\to r^2 \sin^2\theta$.  It then follows from $\det g= \rho^2$ that $g_{\tau \phi}\to 0$ and hence  is asymptotic to (\ref{g0}).
\end{proof}

\begin{remark}
In fact, to prove that the harmonic map is AF as in our Definition \ref{def:AF}, we must also prove decay estimates for the subleading terms. We will not attempt this here, although presumably this could be achieved by a similar method to that used for stationary and axisymmetric metrics~\cite{wein2}.
\end{remark}

We are now ready to establish our final result.  \vskip.3cm

\begin{myproof}{Theorem}{\ref{th:exist}}
By Theorem \ref{th:weinstein}  it follows that there exists a unique harmonic map that is asymptotic to our model map constructed in Proposition \ref{prop:modelmap}. Then apply Proposition \ref{prop:asymp}.
\end{myproof}

\begin{remark}
As a simple application of Theorem \ref{th:exist} we may completely classify {\it static} instantons, i.e., under the further assumption that $\partial_\tau$ is hypersurface orthogonal (equivalently, its orthogonal distribution is involutive). In the parameterisation (\ref{Vom}) this is equivalent to $\omega$ being a constant, which by asymptotic flatness must vanish.  By considering a general rod vector in the $(\partial_\psi, \partial_\chi)$ basis, one can show that the orthogonality condition $g(\partial_\tau, \partial_\phi)=0$, along with the admissibility condition,  implies that the only possible rod vectors are $(0,1)$ or $(1, - \hat{\beta}\Omega)$ where $\hat{\beta} \Omega \in \mathbb{Z}$. In particular, this implies $\hat{\beta}\partial_\tau$ has $2\pi$-periodic orbits and therefore without loss of generality we may set $\Omega=0$. Therefore the general rod structure is given by  rod vectors $(0,1)$ and $(1,0)$.  By our existence theorem there exists a unique solution for any rod structure with such rod vectors parameterised by the finite rod lengths $\ell_A$.  In fact, it is easy to write down the explicit metric in this case, which is the euclidean multi-Schwarzschild solution.  However, it was shown that if $N>2$ such a metric is necessarily conically singular on the axis~\cite{Gibbons:1979nf}. Therefore, the only regular static AF toric instanton is the  euclidean Schwarzschild metric (\ref{Sch}) on $\mathbb{R}^2 \times S^2$. 
\end{remark}

\section{General identities}
\label{sec:id}

In this section we will collect various general identities that we have obtained for AF toric gravitational instantons.

\subsection{Thermodynamic identities}

In this section we derive a general set of identities for AF toric instantons that relate the asymptotic invariants $m, j$ to certain local geometric charges. The latter are defined as follows.
 Each finite rod $I_A$ corresponds to a non-contractible 2-surface $C_A \cong S^2$ in $(M, \mathbf{g})$ and we may define charges associated to each 2-cycle $[C_A]$ by
\be
Q_k [C_A] := \int_{C_A} \star \td k  \; ,   \label{topcharge}
\ee
where $k$ is a Killing field of $(M, \mathbf{g})$.
The integrand is closed by the fact that $(M,\mathbf{g})$ is Ricci flat and hence the integrals can be taken over any surfaces homologous to $C_A$, i.e. any representative of $[C_A]$. Thus these are topological charges.  Before stating our main result it is useful to note the following.

\begin{lemma} For every finite axis rod $I_A$, 
\be
 Q_{v_A}[C_A]= 2{\beta} \ell_A  \; ,  \label{lA}
\ee
where $v_A$ is the rod vector of $I_A$ corresponding to the 2-cycle $[C_A]$.
\end{lemma}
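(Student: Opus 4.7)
The plan is to evaluate $\star d v_A^\flat$ explicitly on $C_A$ using the local form of the metric near $I_A$, and to recognise the pullback as twice the induced volume form on the bolt, so that the result follows at once from (\ref{area}).

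First I would work in the basis $(u_A,v_A)$ adapted to $I_A$ introduced just before (\ref{axisg}), so that $v_A=\partial_{\phi^2}$ and the metric near $I_A$ takes the form (\ref{gnearaxis}). A direct computation gives $v_A^\flat=(\rho^2/h)(\td\phi^2+(w+\Omega)\hat\beta\,\td\phi^1)+O(\rho^4)$, which in the orthonormal coframe
\be
e^1=\frac{c_A}{\sqrt{h}}\,\td\rho,\quad e^2=\frac{\rho}{\sqrt{h}}\bigl(\td\phi^2+(w+\Omega)\hat\beta\,\td\phi^1\bigr),\quad e^3=\frac{c_A}{\sqrt{h}}\,\td z,\quad e^4=\hat\beta\sqrt{h}\,\td\phi^1
\ee
reduces to the tidy expression $v_A^\flat=(\rho/\sqrt{h})\,e^2$. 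Differentiating, and fixing the orientation $e^1\wedge e^2\wedge e^3\wedge e^4$, one finds $dv_A^\flat=(2/c_A)\,e^1\wedge e^2+X$, where $X$ collects terms proportional to $e^2\wedge e^3$ and $e^3\wedge e^4$ arising from $\td h$, $\td w$ and the $(w+\Omega)$-mixing. Using $\star(e^1\wedge e^2)=e^3\wedge e^4$, $\star(e^2\wedge e^3)=e^1\wedge e^4$ and $\star(e^3\wedge e^4)=e^1\wedge e^2$ one obtains $\star dv_A^\flat=(2/c_A)\,e^3\wedge e^4+\star X$, with every term of $\star X$ containing a factor of $e^1$ or $e^2$. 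Pulling back to $C_A$, which is parameterised by $(z,\phi^1)$ so that $\td\rho\to 0$ and $\rho=0$, annihilates $e^1$ and $e^2$ and hence $\star X$, leaving
\be
\iota_A^\ast(\star dv_A^\flat)=\frac{2}{c_A}\,e^3\wedge e^4\big|_{C_A}=2\hat\beta\,\td z\wedge \td\phi^1,
\ee
in which the conical factor $c_A$ cancels. This is exactly $2$ times the induced volume element on $C_A$ read off from (\ref{bubble}), so integrating produces $Q_{v_A}[C_A]=2\,A[C_A]=2\beta\ell_A$ via (\ref{area}).

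The main bookkeeping obstacle is checking that all the subleading pieces in $dv_A^\flat$ really do land in the span of $e^2\wedge e^3$ and $e^3\wedge e^4$, so that $\star X$ is wiped out by the pullback; this is mechanical once the orthonormal coframe is fixed, but requires tracking the $(w+\Omega)$ mixing in (\ref{gnearaxis}) carefully. Conceptually the identity is the Euclidean analogue of the Komar--Smarr relation $\int_{\mathcal{H}}\star\,\td\xi^\flat=2\kappa\,A(\mathcal{H})$ for a Killing horizon, with $C_A$ replacing the bifurcation $2$-sphere; the $2\pi$-periodic normalisation of $v_A$ plays the role of fixing the analogue of the surface gravity to unity, producing the factor of $2$ in the answer.
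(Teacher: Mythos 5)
Your proof is correct and follows essentially the same route as the paper: both work in the $2\pi$-periodic basis $(u_A,v_A)$ adapted to $I_A$, use the near-axis form of the metric to compute $\star\,\td v_A^\flat \to 2\hat\beta\,\td z\wedge\td\phi^1$ as $\rho\to 0$, and then integrate over $C_A$ to get $2\beta\ell_A$. Your explicit orthonormal coframe and the observation that the conical factor $c_A$ cancels make the "direct calculation" the paper leaves implicit fully transparent, but the argument is the same.
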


\begin{proof}
We need to evaluate 
\be
Q_{v_A}[C_A] =\int_{C_A} \star \td v_A  \; .
\ee
It is convenient to use a $2\pi$-periodic oriented-basis $(u_A, v_A)$ adapted to $I_A$, so we can write the Gram matrix  as (\ref{axisg})
where $h>0$ and $w$ are smooth at $\rho=0$.   
Then by direct calculation we find
\be
(\star \td v_A) \to  2\hat{\beta}  \td z  \wedge \td \phi^1
\ee
as $\rho \to 0$ and (\ref{lA}) immediately follows by integration upon use of $\int_{C_A} \alpha = - 2\pi \int_{I_A} \iota_{u_A} \alpha$ where $\alpha$ is a $U(1)^2$-invariant two-form.
\end{proof}

We are now ready to state our main result.
\begin{theorem}
\label{th:id}
Let $(M, \mathbf{g})$ be an AF toric gravitational instanton with rod structure $\mathcal{R}$ and asymptotic invariants $(m ,j)$. Then
\bea
&&j=  - \frac{1}{16 \pi} \sum_{C_A}  Q_{\partial_\chi} [C_A] v^1_A  \; ,  \label{id2} \\
&&m-2 \Omega j= \frac{1}{2\beta} \sum_{C_A}\left( A[C_A]- \frac{1}{2}Q_{\partial_\chi} [C_A] v^2_A \right) \; , \label{id3}
\eea
where $A[C_A]$ is the area of $C_A$ defined by (\ref{area}), $Q_{\partial_\chi}$ is the charge (\ref{topcharge}) and $v^i_A$ are the rod vectors relative to the basis (\ref{basis}).
\end{theorem}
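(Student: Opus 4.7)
The plan is to derive both identities by integrating the reduced Einstein equations on the orbit space $\hat{M}$ and applying Stokes' theorem, using the asymptotic expansion of Proposition \ref{prop:AF} to evaluate the contribution at infinity and the near-axis form (\ref{gnearaxis}) to evaluate the contribution on each rod. The boundary of $\hat{M}$ consists of a large semicircle in the $(\rho,z)$ half-plane together with the axis $\rho = 0$, which decomposes into the rod segments $I_A$.

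For (\ref{id2}), I would integrate the conservation equation (\ref{omeq}), equivalent in Weyl coordinates to $\partial_a(\rho^{-1} V^2 \partial_a \omega) = 0$, against a suitable weight (such as $z$) and convert to boundary terms via integration by parts. The semicircle-at-infinity contribution is extracted using $\omega \sim 2j\sin^2\theta/r$ (and the corresponding expansion of $W$) and reduces to a multiple of $j$. On each rod $I_A$, the near-axis form of the metric in the adapted basis $(u_A, v_A)$ yields an axis-boundary contribution that, after translating from the adapted basis back to $(\partial_\psi, \partial_\chi)$ via the $GL(2,\mathbb{Z})$ matrix $L_A$, is naturally expressed as the Komar-like charge $Q_{\partial_\chi}[C_A]$ weighted by $v_A^1$. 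A useful preliminary step, analogous to the lemma giving $Q_{v_A}[C_A] = 2\beta \ell_A$, would be to compute $Q_{\partial_\chi}[C_A]$ directly from the near-axis functions $h(z), w(z)$ on $I_A$, which reduces the translation to elementary linear algebra.

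For (\ref{id3}), I would instead integrate the equation (\ref{Veqalt}) for $\log V$, namely $\hat{\nabla}^a(\rho \hat{\nabla}_a \log V) = \rho V^{-2} \hat{\nabla}^a W \hat{\nabla}_a W$, over $\hat{M}$. The left-hand side becomes a boundary integral by Stokes; the right-hand side can be handled using $dW = -\rho^{-1} V^2 \hat\star\, d\omega$ and further integration by parts to turn it into additional boundary terms. At infinity, the expansions $V = 1 - 2m/r + \cdots$ and $\omega = 2j\sin^2\theta/r + \cdots$ produce the combination $m - 2\Omega j$: the $-2\Omega j$ arises because the boundary integral at infinity is naturally associated with the $2\pi$-periodic Killing field $\partial_\psi = \hat\beta(\partial_\tau + \Omega\partial_\phi)$, not $\partial_\tau$ itself. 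On each rod, the integrand reduces, via the smoothness condition (\ref{smoothcorner}) on $h(z)$ and the near-axis behaviour of $w(z)$, to the area term $A[C_A]/(2\beta)$ together with a twist correction proportional to $Q_{\partial_\chi}[C_A]\, v_A^2$.

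The main obstacle is the careful bookkeeping of signs and factors in the near-axis expansion, and in particular the emergence of the specific rod-vector components $v_A^1, v_A^2$ from the adapted-basis computation. A useful internal consistency check is the linear relation $v_A^1 Q_{\partial_\psi}[C_A] + v_A^2 Q_{\partial_\chi}[C_A] = Q_{v_A}[C_A] = 2\beta \ell_A$, which is immediate from linearity of $Q_k[C_A]$ in $k$ together with the preceding lemma; it allows one to cross-check the mutual consistency of (\ref{id2}) and (\ref{id3}) and to verify the derivation without separately computing $Q_{\partial_\psi}[C_A]$.
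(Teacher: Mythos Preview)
Your overall strategy---integrate the reduced Einstein equations over the orbit space, convert to boundary terms via Stokes, and evaluate the contributions at infinity (using Proposition~\ref{prop:AF}) and on each rod (using the near-axis form)---is the paper's strategy.  However, the paper executes it rather differently, and your particular implementation has a genuine gap at the two places you yourself flag as ``bookkeeping obstacles''.

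The paper does not split into the scalar equations (\ref{Veqalt}) and (\ref{omeq}) for $(V,\omega)$.  Instead it works directly with the matrix equation (\ref{geq}): the matrix-valued $1$-form $\Theta := \hat\beta\,\rho\, g^{-1}\hat\star\, \td g + \td C$ is closed, where $C$ is a counterterm chosen to cancel the divergent pieces at infinity.  Integrating $\td\Theta=0$ over $\hat M$ yields a single $2\times 2$ matrix identity.  The paper then introduces potentials $Y_{ij}$ via $\td Y_{ij}=\star(\eta_i\wedge \td\eta_j)$, so that $\Theta=\td(-\eta^{-1}Y+C)$ and each rod integral is simply the jump $Y(z_A)-Y(z_{A-1})$; these jumps are identified with the charges $Q_{\eta_i}[C_A]$ directly from (\ref{topcharge}).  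The identities (\ref{id2}) and (\ref{id3}) then drop out of the matrix components in the $(\partial_\psi,\partial_\chi)$ basis, with the $\Omega$-dependence entering transparently through the change-of-basis matrix $L$ in (\ref{L}).

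Your route via $(V,\omega)$ stumbles precisely where you anticipate.  First, plainly integrating (\ref{omeq}) over $\hat M$ gives a vanishing contribution at infinity (the integrand $\rho^{-1}V^2\partial_r\omega$ decays like $r^{-3}$), so ``integrating against a weight such as $z$'' is not what produces $j$; the invariant $j$ is the dipole coefficient of the Ernst potential $W$, and what one really needs is the potential $Y_{\chi\chi}$ (equivalently the jump of $W$ across the corners) to tie the rod data to $Q_{\partial_\chi}[C_A]$.  Second, the combination $m-2\Omega j$ does \emph{not} arise at infinity from integrating (\ref{Veqalt}) as written: the boundary integral $\int_\infty \rho\,\partial_n\log V$ produces a multiple of $m$ alone, since $V=g_{\tau\tau}$ is tied to the $(\partial_\tau,\partial_\phi)$ basis.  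The $\Omega$-dependence enters only after passing to the $2\pi$-periodic basis $(\partial_\psi,\partial_\chi)$, which in the matrix approach is automatic but in the scalar approach would require reparameterising the Gram matrix in that basis from the outset rather than using $(V,\omega)$.
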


\begin{proof}
We will work in the $2\pi$-periodic basis $(\partial_\psi, \partial_\chi)$.
From the asymptotics given in Proposition \ref{prop:AF} we have
\be
g = L \left( \begin{array}{cc}  1-\frac{2m}{r}+O_2(r^{-2}) & \frac{2j \sin^2\theta}{r}+O_2(r^{-2})  \\  \frac{2j \sin^2\theta}{r} +O_2(r^{-2}) & r^2 \sin^2\theta \left(1+ \frac{2m}{r} + O_2(r^{-2}) \right) \end{array} \right) L^T
\ee
where  $L$ is given by $(\ref{L})$ and note that in this basis $\det g=\hat{\beta}^2 \rho^2$.
A computation reveals that
\be
\rho  g^{-1} \partial_r g  = L^{-T}\left( \begin{array}{cc}  \frac{2m\sin \theta}{r}+O(r^{-2})  & - \frac{6j \sin^3\theta}{ r}+O(r^{-2})  \\ O(r^{-3}) & 2 \sin \theta (1- \frac{m}{ r} )+O(r^{-2})  \end{array} \right) L^T
\ee
and
\be
\rho g^{-1} \partial_\theta g = L^{-T} \left( \begin{array}{cc}  O(r^{-3})  & O(r^{-2}) \\ O(r^{-2}) & 2r \cos \theta + O(1)  \end{array} \right)L^T  \; .
\ee
From (\ref{geq})  we know that 
\be
\Theta :=\hat{\beta} \rho g^{-1} \star_2 \td g + \td C
\ee
 is a closed 1-form where $C$ is an arbitrary matrix of functions (the factor of $\hat{\beta}$ is introduced  for convenience). Then we may choose $C$ to eliminate the divergent terms in the asymptotics. Choose the orientation $\star_2 \td r = r \td \theta$ and $\star_2 \td \theta = - \td r /r$ (so that $\td \rho \wedge \td z$ has positive orientation). We find that a simple choice is
\be
C = \hat{\beta}L^{-T} \left( \begin{array}{cc} 0 & 0 \\ 0 & 2 z \end{array} \right) L^T = \left( \begin{array}{cc} 0 & 0 \\ 2\hat{\beta}^2 \Omega z & 2\hat{\beta} z \end{array} \right) 
\ee
which gives
\be
\Theta =  \hat{\beta} L^{-T} \left( \begin{array}{cc}  2m\sin \theta+ O(r^{-1})  & - 6 j\sin^3\theta +O(r^{-1})   \\  O(r^{-2}) & -2m \sin \theta +O(r^{-1})  \end{array} \right) L^T  \td\theta  + O(r^{-2}) \td r \; .
\ee
Therefore integrating this over the semi-circle at infinity from $\theta=\pi$ to $\theta=0$ (i.e. clockwise in the $z\rho$ plane since this is the induced orientation) gives
\be
\int_\infty \Theta 
= -\hat{\beta} \begin{pmatrix} 4(m-2 \Omega j) & - 8 j \hat{\beta}
^{-1} \\ 8\hat{\beta}\Omega ( -m+ j \Omega) & -4 (m - 2\Omega j)\end{pmatrix}  \; .  \label{intThetainf}
\ee
On the other hand, integrating $\td \Theta=0$ over $\hat{M}$ gives
\be
\int_\infty \Theta -\sum_{A=1}^{N+1} \int_{I_A} \Theta=0  \; ,  \label{ID}
\ee
where the integrals over the $I_A$ are taken with a positive orientation along the $z$-axis (the minus sign arises as we are traversing the contour clockwise in the $z\rho$ plane).
These identities relate the integral at infinity (\ref{intThetainf}) to the axis integrals which we evaluate next.

The axis integrals can be evaluated in terms of certain potentials $Y_{ij}$ defined  by
\be
\td Y_{ij} := \star (\eta_i \wedge \td \eta_j)
\ee
where $\eta_i, i=1,2$ are a basis of the $U(1)^2$-Killing fields and the r.h.s. is a closed 1-form for each $i,j$ by Ricci flatness of $(M,\mathbf{g}).$\footnote{These are Riemannian versions of the twist and Ernst potentials used in General Relativity.} The existence of globally defined functions $Y_{ij}$ then follows since we assume $M$ is simply connected. A short computation reveals that
\be\label{dY}
\td Y_{ij} =- \eta_{ik}  \hat{\star} (\hat{\beta} \rho  g^{-1} \td g)^{k}_{~j}
\ee
where $\eta_{ij}$ is an antisymmetric symbol and $\eta_{12}=1$. To obtain this we used that the volume form of our 4-manifold has nonzero components determined by $\epsilon_{ij ab}=\hat{\beta} \rho \eta_{ij} \hat{\epsilon}_{ab}$, where the factor of $\hat{\beta}$ arises as we are in a basis where $\sqrt{\det g}=\hat{\beta} \rho$. Therefore we can write
\be
\Theta^i_{~j}= \td ( -\eta^{ik} Y_{kj}+ C^i_{~j})
\ee
where $\eta^{ij}$ is the inverse matrix of $\eta_{ij}$, so the finite axis rod integrals become (in matrix notation)
\be
\int_{I_A} \Theta =- \eta^{-1} [Y(z_{A})- Y(z_{A-1})] + 2 \left( \begin{array}{cc} 0 & 0 \\ \hat{\beta}\Omega & 1\end{array} \right) \hat{\beta} \ell_{A}  \; .
\ee
The semi-infinite axis integrals require special attention, which we turn to next. 

Near $I_1$ and $I_{N+1}$ we can parameterise the metric in the basis $(\partial_\psi, \partial_\chi)$ by (\ref{axisg}) where $h>0$ and $w$ are smooth at $\rho=0$.
Then, we find
\be
\Theta_z =\hat{\beta} L^{-T} \begin{pmatrix} O(\rho) & O(\rho^3) \\ 2 w + O(\rho) & O(\rho) \end{pmatrix} L^T = \begin{pmatrix} O(\rho) & O(\rho^3) \\ 2\hat{\beta}^2 w + O(\rho) & O(\rho) \end{pmatrix} 
\ee
and hence
\be
\int_{I_1} \Theta= \begin{pmatrix} 0 & 0 \\  \int_{I_1} 2 \hat{\beta}^2 w  & 0 \end{pmatrix} , \qquad \int_{I_{N+1}} \Theta= \begin{pmatrix} 0 & 0 \\   \int_{I_{N+1}} 2 \hat{\beta}^2 w & 0 \end{pmatrix}  \; .
\ee

We can express the finite axis integrals in terms of the topological charges (\ref{topcharge}) as follows.  We can reduce the integrals to the orbit space
\bea
Q_{\eta_i}[C_A] &=&- 2\pi \det L_A \int_{I_A} \iota_{u_A} \star \td \eta_i = 2\pi \det L_A \int_{I_A} \star( u_A \wedge \td \eta_i) \nonumber \\  &=& 2\pi \det L_A L_{A 1j} \int_{I_A} \star (\eta_j \wedge \td \eta_i)  \; ,
\eea
where recall $u_A$ is the nonvanishing $U(1)$ generator on $I_A$. Clearly, it is also true that 
\be
0= \int_{I_A} \star( v_A \wedge \td \eta_i)= L_{A 2j} \int_{I_A} \star (\eta_j \wedge \td \eta_i)  \; ,
\ee
since $v_A$ vanishes on $I_A$. We can write these as a matrix  equation
\be\label{Q}
\begin{pmatrix} Q_{\eta_1}[C_A] & Q_{\eta_2}[C_A] \\ 0 & 0 \end{pmatrix}  =2\pi \det L_A L_A [Y(z_{A})- Y(z_{A-1})] 
\ee
where we have used the definition of $Y_{ij}$.  Therefore,
\bea
\int_{I_A} \Theta & =&-(2\pi \det L_A)^{-1} \eta^{-1} L_A^{-1}\begin{pmatrix} Q_{\eta_1}[C_A] & Q_{\eta_2}[C_A] \\ 0 & 0 \end{pmatrix} + 2 \left( \begin{array}{cc} 0 & 0 \\ \hat{\beta}\Omega & 1\end{array} \right) 
 \hat{\beta} \ell_{A}   \; , \nonumber \\
& =& -\frac{1}{2\pi}\begin{pmatrix} Q_{\eta_1}[C_A] v^1_A& Q_{\eta_2}[C_A]v_A^1 \\ Q_{\eta_1}[C_A] v_A^2 & Q_{\eta_2}[C_A] v_A^2 \end{pmatrix} + 2 \left( \begin{array}{cc} 0 & 0 \\ \hat{\beta}\Omega & 1\end{array} \right)\hat{\beta} \ell_{A}   \; . 
\eea
Note that the $u_A$ vectors have dropped out of this expression which is therefore manifestly invariant under the freedom  in the definition of $u_A$, given by $u_A\to u_A+ n v_A$ where $n \in \mathbb{Z}$, as it must be.

Putting everything together we arrive at the identities
\bea
m-2 \Omega j = \frac{1}{4{\beta}}  \sum_{C_A}  Q_{\partial_\psi}[C_A] v^1_A  \; , \label{id1}
\eea
and (\ref{id2}) and (\ref{id3}),  together with an identity for $\int_{I_1 \cup I_{N+1} }w$, where we have used (\ref{area}) and the fact that our  basis is $(\partial_\psi, \partial_\chi)$. In fact, (\ref{id1}) and (\ref{id3}) are equivalent upon use of the identities (\ref{lA}).
Therefore, the content of our identities reduces to (\ref{id2}) and (\ref{id3}) together with (\ref{lA}).
\end{proof}

\begin{remark}
Observe that, combining (\ref{id2}) and (\ref{id3}) we can extract an identity for $m$:
\be
m = \frac{1}{2\beta}  \sum_{C_A} \left(A[C_A]- \frac{1}{2}Q_{\partial_\chi}[C_A] (v^2_A+ \Omega \hat{\beta} v_A^1) \right)  \; .   \label{mass}
\ee
Thus we obtain a formula for the mass as a sum over invariants of the bolts $C_A$.
\end{remark}

Let us consider the special case of Kerr rod structure, so  $N=2$ and $\underline{v}_2=(1,0)$. 
Then (\ref{id3}) reduces to
\be\label{Kerr1}
m-2\Omega j= \frac{1}{2 \beta} A[C_2]
\ee
which is the euclidean version of the Smarr relation, whereas (\ref{id2}) gives
\be\label{Kerr2}
j=  -\tfrac{1}{16\pi}Q_{\partial_\chi} [C_2]
\ee
and the remaining independent identity is simply
\be\label{Kerr3}
A[C_2]= \frac{1}{2}Q_{\partial_\psi} [C_2]  \; .
\ee
These relations for $Q_{\eta_i}[C_2]$ are euclidean versions of the Komar integrals for the angular momentum and the horizon mass. We verify these identities for the explicit Kerr instanton in Appendix \ref{app:explicit}.

For the Chen-Teo instanton rod structure $N=3$, $\underline{v}_2=(1, 0)$, $\underline{v}_3=(1, -1)$, so we obtain
\bea
&&m- 2 \Omega j  = \frac{1}{2\beta}( A[C_2] +A[C_3]) + \frac{1}{4 {\beta}}Q_{\partial_\chi}[C_3]  \label{CTm} \\ 
&&j =   -\tfrac{1}{16 \pi}(Q_{\partial_\chi}[C_2]+Q_{\partial_\chi}[C_3])  \; . \label{CTj}
 \eea
We have checked these are indeed satisfied by the Chen-Teo instanton in  Appendix \ref{app:explicit}.

In general, we thus obtain euclidean versions of the well-known Smarr relation and Komar angular momentum and mass formulae for multi-black holes.

\subsection{Euclidean action}

The action for an AF instanton $(M, \mathbf{g})$ is given by the Einstein-Hilbert action together with the Gibbons-Hawking-York boundary term:  
\be
I = \frac{1}{16\pi} \int_M  R \sqrt{\mathbf{g}} \,\td^4 x -  \lim_{r\to \infty} \frac{1}{8\pi}\int_{\partial M_r} (K-K_0) \sqrt{h}\,  \td^3 x  \; ,
\ee 
where the boundary $\partial M_r$ is taken at fixed large $r$ in the asymptotic end in Definition \ref{def:AF},  $h$ the induced metric, $K$ the mean curvature in $(M, \mathbf{g})$ and $K_0$ is the mean curvature  in $(M_{\flat},\mathbf{g}_0)$~\cite{Gibbons:1976ue}.  This can be evaluated in exactly the same manner as for euclidean Kerr.  Indeed, since it depends only on the leading asymptotics given in Proposition \ref{prop:AF}, and this is the same as for the Kerr solution, we find that it takes exactly the same form,
\be 
I =\frac{\beta m}{2}  \; .
\ee
Upon combining this with (\ref{mass}) we deduce
\be
I= \sum_{C_A} \left( \frac{1}{4}A[C_A]- \frac{1}{8}Q_{\partial_\chi}[C_A] (v^2_A+ \Omega \hat{\beta} v_A^1) \right)  \; .
\ee
In the context of the euclidean approach to quantum gravity the euclidean action is minus the logarithm of the partition function~\cite{Gibbons:1976ue}. In order to compute the entropy one needs the $\beta$ dependence of the partition function, which unfortunately we do not have access to since regularity at the axes will impose nontrivial relations between the moduli $\ell_A, \beta, \Omega$.
Nevertheless, it is tantalising that a $1/4$ of the area of the bolts appears in the action.
A similar formula was found for compact gravitational instantons~\cite{Gibbons:1979xm}.

\appendix

\section{Curvature calculations} 
\label{app:curvature}

We require the Riemann curvature tensor of the metric
\be
\mathbf{g}= g_{ij}\td \phi^i \td \phi^j+ \hat{g}_{ab} \td x^a \td x^b
\ee
where  $\partial_{\phi^i}$ are Killing fields. In the coordinate chart $x^A=(\phi^i, x^a)$ we find that the non-vanishing components of the Christoffel symbols are given by
\be
\Gamma^a_{bc}= \hat{\Gamma}^a_{bc}, \qquad \Gamma^a_{ij}= - \tfrac{1}{2} \hat{g}^{ab}\partial_b g_{ij}, \qquad \Gamma^i_{j a}= \tfrac{1}{2} (J_a)^i_{~j}  \; ,
\ee
where recall $J$ is defined by (\ref{defs}).
The components of the Riemann tensor of ${g}_{AB}$ are 
\begin{equation}
R^A_{~BCD} = \partial_C \Gamma^A_{~BD} - \partial_D \Gamma^A_{~BC} + \Gamma^A_{~FC}\Gamma^F_{~BD} - \Gamma^A_{~FD}\Gamma^F_{~BC}
\end{equation}
and we find the non-vanishing components are given by 
\be
\begin{aligned}
R^a_{~bcd} &= \hat{R}^a_{~bcd} \\
R^i_{~jab} & = -\frac{1}{2} (J_{[a})^i_{~k} (J_{b]})^k_{~j}  \\
R^i_{~ajb} &= - \hat{\nabla}_b (J_a)^i_{~j}- \frac{1}{4} (J_b)^i_{~k} (J_a)^k_{~j} \\
R^{kl}_{\phantom{kl}ij}  & = -\frac{1}{2} (J_a)^k_{[i} (J^a)^l_{j]}  \; ,
\end{aligned}
\ee
where $\hat{R}^a_{~bcd}$ and $\hat{\nabla}$ are the Riemann tensor and metric connection of $\hat{g}$, and we have used (\ref{zerocurvature}) to simplify $R^i_{~jab}$. The Ricci tensor $R_{BD}= R^A_{~BAD}$ then has non-vanishing components
\be
\begin{aligned}
R_{ij} &= -\frac{1}{2\rho}g_{ik} \hat{\nabla}^a( \rho (J_a)^k_{~j}) \\
R_{ab} & = \hat{R}_{ab} - \hat{\nabla}_a \hat{\nabla}_b \log \rho - \frac{1}{4} \text{Tr}( J_a J_b)  \; ,
\end{aligned}
\ee
where recall $\rho$ is defined by (\ref{defs}), and we have simplified $R_{ij}$ using the identity 
\be
\text{Tr} \, J= \frac{2 \td \rho}{\rho}  \; .
\ee
It therefore immediately follows that Ricci-flatness of $\mathbf{g}$ is equivalent to (\ref{geq}) and (\ref{2dRic}).

\section{Explicit instantons}
\label{app:explicit}

\subsection{Euclidean Kerr instanton}
The Euclidean Kerr instanton is  a family of Ricci flat metrics on $M = \mathbb{R}^2 \times S^2$, 
\begin{equation}
\mathbf{g}_{\text{\tiny K}} = \frac{f}{\Sigma}\left(\td \tau + \alpha \sin^2\theta \td \phi\right)^2 + \Sigma\left(\frac{\td r^2}{f} + \td \theta^2\right) + \frac{\sin^2\theta}{\Sigma} \left((r^2 - \alpha^2) \td \phi- \alpha \td \tau\right)^2 \; ,
\end{equation}  
where $f := r^2 - 2 \mu r - \alpha^2$ and $\Sigma := r^2 - \alpha^2 \cos^2\theta$, which is parameterised by two constants $\mu>0, \alpha \geq 0$. The `radial' coordinate $ r \in (r_+, \infty)$ where $r_+ := \mu + \sqrt{\mu^2 + \alpha^2}$ is the positive root of $f(r)$ and $\theta \in (0,\pi)$. $(M, \mathbf{g}_{\text{\tiny K}})$ admits a torus action as isometries generated by $\frac{\partial}{\partial \tau}, \frac{\partial}{\partial \phi}$.  
This metric  is AF as $r\to \infty$ with $(\tau, \phi)$ coinciding with the canonical coordinates in Definition \ref{def:AF}  (note the $(r,\theta)$ coordinates here are different to those in Proposition \ref{prop:AF}). The asymptotic invariants can be easily computed and are found to be
\be
m=\mu, \qquad  j = -\alpha \mu  \; .
\ee
Clearly the subset  of metrics with $\alpha =0$ is the Schwarzschild instanton. 

We identify the canonical orbit space coordinates $(\rho,z)$ by
\begin{equation}
\rho = \sqrt{\det{g}} = \sqrt{f} \sin\theta, \qquad z = (r - \mu) \cos\theta.
\end{equation} Inspection of the zero set of $\rho$ reveals the rod structure: 
\begin{enumerate}
\item $I_1$: a semi-infinite rod $(-\infty, z_1)$ with $z_1 = -(r_+ - \mu)$ (corresponding to $r> r_+, \theta =\pi$); 
\item $I_2$: a finite rod $(z_1, z_2)$ with $z_2 = r_+ - \mu$ (corresponding to the set $r = r_+, 0 < \theta < \pi$);  
\item  $I_3$: a semi-infinite rod $(z_2, \infty)$ (corresponding to $r > r_+, \theta =0$)
\end{enumerate}  On $I_1 \cup I_3$ the vector $\partial_\phi$ vanishes, whereas on $I_2$ it is easy to verify that $\partial_\psi$ vanishes, where $\partial_\psi$  defined by (\ref{basis})
with
\begin{equation}
\Omega = \frac{\alpha}{r_+^2 - \alpha^2} , \qquad \beta = \frac{4\pi r_+ (r_+^2 - \alpha^2)}{r_+^2 + \alpha^2} \; ,
\end{equation}  
 degenerates smoothly and generates $2\pi$-periodic flows.  Thus in our canonical basis $(\partial_\psi, \partial_\chi)$  the rod vectors are $\underline{v}_1 = (0,1),  \underline{v}_2 =(1,0), \underline{v}_3 = (0,1)$. 

A straightforward computation using \eqref{dY} yields (with an appropriate choice of integration constant) 
\begin{equation}\begin{aligned}
Y_{\chi\chi} &= \frac{4a (r_+^2 - a^2)(2 r^2 + (r^2 + a^2)\cos\theta)\sin^4 \left(\frac{\theta}{2}\right)}{r_+ \Sigma}
 \\
Y_{\chi\psi} & = - \frac{2(r_+^2 - a^2)(2 a^2(r^2 - r_+^2)\cos\theta + a^2(r^2 +a^2) \cos 2\theta + a^4 - 2 r^2 r_+^2 - a^2 r^2)\sin^2 \left(\frac{\theta}{2}\right)}{(r_+^2 + a^2) \Sigma}
\\
 Y_{\psi\psi} &= \frac{2a r_+(r-r_+)^2 (r_+^2 - a^2) (4 r r_+ + 2r_+^2 + a^2 + a^2\cos 2\theta)\cos\theta}{(r_+^2 + a^2)^2 \Sigma} \\
 Y_{\psi\chi} & = \frac{(r- r_+)(r_+^2 - a^2)( 4r^2 r_+ - a^2 r - 3 a^2 r_+ + (r- r_+) a^2 \cos 2\theta)\cos\theta}{(r_+^2 + a^2) \Sigma}
 \end{aligned}
\end{equation} In particular we find that in the $(\partial_\psi,\partial_\chi)$ basis,
\begin{equation}
Y(z_2) - Y(z_1) = -\frac{\beta}{2\pi} \cdot \frac{2(r_+^2 + \alpha^2)}{r_+} \begin{pmatrix}  0 & 0 \\ 1 & \frac{\alpha}{r_+} \end{pmatrix}.
\end{equation}  For the change of basis matrix we take 
\begin{equation}
L_2 = \begin{pmatrix} 0 & 1 \\ 1 & 0 \end{pmatrix} , 
\end{equation} and therefore using \eqref{Q} we find
\begin{equation}
Q_{\partial_\psi}[C_2] = \frac{2\beta (r_+^2 + \alpha^2)}{r_+}, \qquad Q_{\partial_\chi}[C_2] = \frac{2\beta \alpha (r_+^2 + \alpha^2)}{r_+^2}.
\end{equation} Noting that $\ell_2 = z_2 - z_1 = (r_+^2 + \alpha^2)/r_+$, we have thus verified the identifies 
\begin{equation}
Q_{\partial_\psi}[C_2] = 2\beta \ell_2, \qquad Q_{\partial_\chi}[C_2] = -16 \pi j. 
\end{equation} Hence \eqref{Kerr1}, \eqref{Kerr2}, \eqref{Kerr3} are satisfied. 

\subsection{Chen-Teo instanton on $\mathbb{CP}^2 \setminus \mathbb{S}^1$} Consider the local family of Ricci-flat metrics~\cite{Chen:2015vva}
\begin{equation}\label{g}
\mathbf{g} = \frac{F(x,y)}{(x-y)H(x,y)} \left( \td\bar{\tau} + \frac{G(x,y)}{F(x,y)} \td\bar\phi\right)^2 + \frac{\kappa H(x,y)}{(x-y)^3} \left( \frac{\td x^2}{X(x)} - \frac{\td y^2}{Y(y)} - \frac{X(x)Y(y)}{\kappa F(x,y)} \td\bar\phi^2\right)
\end{equation} where $X = P(x), Y = P(y)$ are quartic polynomials 
\begin{equation}
P(u) = a_0 + a_1 u + a_2 u^2 + a_3 u^3 + a_4 u^4
\end{equation} and $F(x,y), H(x,y), G(x,y)$ are polynomials in $x,y$ given by
\begin{equation}
\begin{aligned}
F(x,y) & = y^2 X - x^2 Y\\
H(x,y) & =(\nu x + y)\left[(\nu x - y)(a_1 - a_3 x y) - 2(1 - \nu)(a_0 -a_4 x^2 y^2)\right] \\
G(x,y) & = (\nu^2 a_0 + 2 \nu a_3 y^3 + 2 \nu a_4 y^4 - a_4 y^4)X + (a_0 - 2\nu a_0 - 2\nu a_1 x - \nu^2 a_4 x^4) Y
\end{aligned}
\end{equation} The local metrics have 7 parameters: $(a_i, \nu, \kappa)$.

The Chen-Teo AF instanton~\cite{Chen:2011tc} is obtained by restricting to a certain two-parameter subfamily chosen to produce an AF smooth metric on $\mathbb{CP}^2 \setminus \mathbb{S}^1$, free of orbifold and conical singularities. This imposes a set of algebraic constraints on the $(a_i, \nu)$ which can be solved by setting~\cite{Chen:2015vva}: 
\begin{equation*}
\begin{aligned}
a_0 &= 4(1-\xi)(2\xi -1)(1 - 2\xi + 2\xi^2)\xi^4, \qquad a_1 = -\xi(1 - 4\xi + 10 \xi^2 - 20\xi^3 + 20\xi^4 - 16\xi^5 + 8 \xi^6) \\
a_2 &= -1 + 3\xi - 2 \xi^2 + 6 \xi^3 - 4 \xi^4, \qquad a_3 = 1, \qquad a_4 =0, \qquad \nu = -2 \xi^2
\end{aligned}
\end{equation*}  This has the result of explicitly factoring  $P(x)$ so it has roots $x_1 < x_2 < x_3$ given by
\begin{equation*}
x_1 = -4\xi^3(1 - \xi), \qquad x_2 = -\xi(1 - 2\xi + 2\xi^2), \qquad x_3 = 1- 2 \xi.
\end{equation*}   The resulting metric is parameterised by a scale $\kappa > 0$ and a parameter $\xi \in (1/2, 1/ \sqrt{2})$ \cite{Baird:2020psu}. The coordinate range is the rectangle in the $(x,y)$ plane with $x_1 < y < x_2 < x < x_3$ and the asymptotic region corresponds to $x \to x_2^+, y \to x_2^-$.   To exhibit asymptotic flatness change coordinates to 
\be
x= x_2- \frac{x_2 \sqrt{(1-v^2)\kappa} \cos^2\left(\frac{\theta}{2}\right)}{r}, \qquad y= x_2+ \frac{x_2 \sqrt{(1-v^2)\kappa} \sin^2\left(\frac{\theta}{2}\right)}{r},
\ee
and 
\be
\bar{\tau}= \sqrt{1-v^2} \tau+ b \phi, \qquad \bar{\phi}= c \phi
\ee
where 
\be
b =- \frac{8 \sqrt{\kappa} \xi^3 \left(8 \xi^5-16 \xi^4+12 \xi^3-8 \xi^2+4 \xi-1\right)}{(1-\xi) (2 \xi-1) \left(1-2 \xi^2\right)^2} \; , \qquad c =- \frac{2 \sqrt{\kappa} \left(2 \xi^2-2 \xi+1\right)}{(1-\xi) (2 \xi-1) \left(2 \xi^2-1\right)^2} \; .
\ee
Then one finds the metric is AF as $r \to \infty$ according to our Definition \ref{def:AF} (note these are not the $(r, \theta)$ coordinates in Proposition \ref{prop:AF}).  The  associated asymptotic invariants are found to be
\begin{equation}
m =  \frac{\sqrt{\kappa} (1 + 2 \xi^2)^2}{2\sqrt{1 - 4 \xi^4}}, \qquad j = - \frac{\kappa \xi^2 (1 - \xi + 2\xi^2)}{(1-\xi)(2\xi-1)} .
\end{equation}
  The canonical orbit space coordinates $(\rho,z)$ are given in terms of the coordinates $(x,y)$ by 
\begin{equation}
\rho =\frac{ \sqrt{-XY}}{(x-y)^2} \cdot \left(- c \sqrt{1-v^2} \right)
\end{equation} and 
\begin{equation}\label{zCT}
z = \frac{2(a_0 + a_2 x y + a_4 x^2 y^2) + (x+y)(a_1 + a_3 x y)}{2(x-y)^2} \cdot c \sqrt{1-v^2}
\end{equation} The form $\td y \wedge \td x$   is taken to have positive orientation on $\hat{M}$ in order to be consistent with our conventions.

 Let $(\psi, \chi)$ be $2\pi-$periodic angles defined by (\ref{basis}) where $(\tau, \phi)$ are the  coordinates in which the metric takes the canonical form \eqref{model} at infinity.  These are related by
\begin{align}
\bar\tau = \frac{b_1}{k_1} \psi + \frac{b_2}{k_2} \chi, \qquad \bar\phi = \frac{\psi}{k_1} + \frac{\chi}{k_2} ,
\end{align} where
\begin{equation}
\beta = \frac{16\pi \sqrt{k} \xi^4}{\sqrt{1-4\xi ^4} \left(2\xi ^2-2 \xi +1\right)^2}, \qquad \Omega = \frac{(1-\xi)^2 \sqrt{1 - 4 \xi^4}}{2 \sqrt{\kappa} \xi^2},
\end{equation} and 
\begin{gather}
b_1  = \frac{\xi ^2 \left(1-2 \xi 
   \left(8 \xi ^4-16 \xi ^3+10
   \xi ^2-3 \xi
   +2\right)\right)}{1-\xi } , \qquad b_2 = \frac{4 \xi ^3 \left(4 (1-\xi
   )^2 \xi  \left(2 \xi
   ^2+1\right)-1\right)}{1-2
   (1-\xi ) \xi }, \\
   k_1  = \frac{(1-2 \xi ) \left(1-2 \xi
   ^2\right)^2 \left(2 \xi
   ^2-2 \xi +1\right)}{8
   \sqrt{\kappa} (1-\xi ) \xi ^2}, \qquad k_2 = \frac{(1-2 \xi ) (1-\xi )
   \left(1-2 \xi
   ^2\right)^2}{2 \sqrt{\kappa}
   \left(2 \xi ^2-2 \xi
   +1\right)} .
\end{gather}  
The corner points $z_1 < z_2 < z_3$ are given as follows
\begin{equation}
z_1 = z(x_2, x_1), \quad  z_2 = z(x_3, x_1), \quad  z_3 = z(x_3, x_2).
\end{equation} where we have written \eqref{zCT} as $z = z(x,y)$.  We then have the following rod structure in the $(\partial_\psi, \partial_\chi)$ basis:
\begin{enumerate}
\item $I_1$:  $z < z_1$: $x = x_2$, $x_1 < y < x_2$; $\underline{v}_1=(0,1)$
\item $I_2$:  $z_1 < z < z_2$: $y = x_1$, $x_2 < x < x_3$;  $\underline{v}_2=(1,0)$
\item $I_3$: $z_2 < z < z_3$: $x = x_3$, $x_1 < y < x_2$;  $\underline{v}_3=(1,-1)$
\item $I_4$: $z > z_3$: $y = x_2$, $x_2 < x < x_3$;  $\underline{v}_4=(0,1)$
\end{enumerate} The two finite rods $I_2, I_3$ correspond to two non-trivial 2-cycles $C_2, C_3$ respectively. 

We now turn to computing the charges $Q_{\eta_i}[C_A]$ . Over the first finite rod, an involved calculation using (\ref{dY}) in the $(\partial_\psi, \partial_\chi)$ basis yields
\begin{equation}
Y(z_2) - Y(z_1) = \begin{pmatrix}  0 & 0 \\ -\frac{16 k \xi^4}{(2\xi-1)(1 - 2\xi^2)(1 - 2\xi + 2 \xi^2)} & -\frac{8 k \xi^2}{(2\xi -1)(1-2\xi^2)} \end{pmatrix}.
\end{equation} Note that the first row vanishes as it must because $\partial_\psi$ vanishes on $I_2$.  A similar computation gives
\begin{equation}
Y(z_3) - Y(z_2) = \begin{pmatrix}  -\frac{32 k \xi^6}{(1-2\xi^2)(1 - 2\xi + 2\xi^2)} & \frac{16 k \xi^5}{(1-\xi)(1 - 2\xi^2)} \\   -\frac{32 k \xi^6}{(1 - 2\xi^2)(1 - 2\xi + 2 \xi^2)} &  \frac{16 k \xi^5}{(1 - \xi)(1 - 2 \xi^2)} 
\end{pmatrix} .
\end{equation} We take the change of basis matrices 
\begin{equation}
L_2 = \begin{pmatrix}  0 & 1 \\ 1 & 0 \end{pmatrix}, \qquad L_3 = \begin{pmatrix} 1 & 0 \\ 1 & -1 \end{pmatrix}. 
\end{equation} This yields using \eqref{Q}
\begin{equation}
Q_{\partial_\psi}[C_2] = \frac{32\pi k \xi^4}{(2\xi-1)(1 - 2\xi^2)(1 - 2\xi + 2 \xi^2)}, \qquad Q_{\partial_\chi}[C_2] = \frac{16\pi k \xi^2}{(2\xi -1)(1-2\xi^2)},
\end{equation} and 
\begin{equation}
Q_{\partial_\psi}[C_3] = \frac{64\pi k \xi^6}{(1-2\xi^2)(1 - 2\xi + 2\xi^2)}, \qquad Q_{\partial_\chi}[C_3] =- \frac{32\pi k \xi^5}{(1-\xi)(1 - 2\xi^2)} .
\end{equation}  The identities \eqref{lA},  \eqref{CTm}, and \eqref{CTj} can now be verified.


\begin{thebibliography}{99} 

\bibitem{Abreu} M. Abreu,  ``Kahler geometry of toric manifolds in symplectic coordinates",  arXiv: math/0004122.



\bibitem{Baird:2020psu}
T.~J.~Baird and H.~K.~Kunduri,
``Abelian instantons over the Chen-Teo AF geometry,''
J. Geom. Phys. \textbf{168} (2021), 104310
[arXiv:2012.12979 [math.DG]].

\bibitem{BCH}
J.~M.~Bardeen, B.~Carter and S.~W.~Hawking,
``The Four laws of black hole mechanics,''
Commun. Math. Phys. \textbf{31}, 161-170 (1973)

\bibitem{BeigII}
R.~Beig and W.~Simon, ``The stationary gravitational field near spatial infinity,''
Gen. Rel. Grav. \textbf{12}, 1003-1013 (1980)

\bibitem{Chen:2010zu}
Y.~Chen and E.~Teo,
``Rod-structure classification of gravitational instantons with U(1)xU(1) isometry,''
Nucl. Phys. B \textbf{838} (2010), 207-237
[arXiv:1004.2750 [gr-qc]].


\bibitem{Chen:2011tc}
Y.~Chen and E.~Teo,
``A New AF gravitational instanton,''
Phys. Lett. B \textbf{703} (2011), 359-362
[arXiv:1107.0763 [gr-qc]].

\bibitem{Chen:2015vva}
Y.~Chen and E.~Teo,
``Five-parameter class of solutions to the vacuum Einstein equations,''
Phys. Rev. D \textbf{91}, no.12, 124005 (2015)
[arXiv:1504.01235 [gr-qc]].


\bibitem{Chrusciel:2008js}
P.~T.~Chrusciel and J.~Lopes Costa,
``On uniqueness of stationary vacuum black holes,''
Asterisque \textbf{321} (2008), 195-265
[arXiv:0806.0016 [gr-qc]].


\bibitem{Chrusciel:2012jk}
P.~T.~Chrusciel, J.~Lopes Costa and M.~Heusler,
``Stationary Black Holes: Uniqueness and Beyond,''
Living Rev. Rel. \textbf{15} (2012), 7
[arXiv:1205.6112 [gr-qc]].

\bibitem{Gibbons:1976ue}
G.~W.~Gibbons and S.~W.~Hawking,
``Action Integrals and Partition Functions in Quantum Gravity,''
Phys. Rev. D \textbf{15} (1977), 2752-2756

\bibitem{Gibbons:1979zt}
G.~W.~Gibbons and S.~W.~Hawking,
``Gravitational Multi - Instantons,''
Phys. Lett. B \textbf{78} (1978), 430


\bibitem{Gibbons:1979xm}
G.~W.~Gibbons and S.~W.~Hawking,
``Classification of Gravitational Instanton Symmetries,''
Commun. Math. Phys. \textbf{66} (1979), 291-310

\bibitem{Gibbons:1979nf}
G.~W.~Gibbons and M.~J.~Perry,
``New Gravitational Instantons and Their Interactions,''
Phys. Rev. D \textbf{22} (1980), 313

\bibitem{Gibbons:1979xn}
G.~W.~Gibbons and C.~N.~Pope,
``The Positive Action Conjecture and Asymptotically Euclidean Metrics in Quantum Gravity,''
Commun. Math. Phys. \textbf{66} (1979), 267-290






\bibitem{Gibbons:1987sp}
G.~W.~Gibbons and P.~J.~Ruback,
``The Hidden Symmetries of Multicenter Metrics,''
Commun. Math. Phys. \textbf{115} (1988), 267

\bibitem{Hollands:2007aj}
S.~Hollands and S.~Yazadjiev,
``Uniqueness theorem for 5-dimensional black holes with two axial Killing fields,''
Commun. Math. Phys. \textbf{283}, 749-768 (2008)
[arXiv:0707.2775 [gr-qc]].


\bibitem{Hollands:2008fm}
S.~Hollands and S.~Yazadjiev,
``A Uniqueness theorem for stationary Kaluza-Klein black holes,''
Commun. Math. Phys. \textbf{302} (2011), 631-674
[arXiv:0812.3036 [gr-qc]].

\bibitem{Israel}
W.~Israel,
Phys. Rev. \textbf{164}, 1776-1779 (1967)

\bibitem{Khuri:2017xsc}
M.~Khuri, G.~Weinstein and S.~Yamada,
``Stationary Vacuum Black Holes in 5 Dimensions,''
Diff. Eq. \textbf{43} (2018), 1205-1241
[arXiv:1711.05229 [gr-qc]].



\bibitem{Kronheimer:1989zs}
P.~B.~Kronheimer,
``The construction of ALE spaces as hyper-K\"ahlerquotients,''
J. Diff. Geom. \textbf{29}, no.3, 665-683 (1989)

\bibitem{Kunduri:2018qqt}
H.~K.~Kunduri and J.~Lucietti,
Class. Quant. Grav. \textbf{36} (2019) no.7, 07LT02
[arXiv:1810.13210 [hep-th]].




\bibitem{Lapedes:1980st}
A.~S.~Lapedes,
``Black Hole Uniqueness Theorems in Euclidean Quantum Gravity,''
Phys. Rev. D \textbf{22} (1980), 1837

\bibitem{LSZ}
P.~Liu,  Y.~Shi, and J.~Zhu. "Positive mass theorems of ALF and ALG manifolds." [arXiv:2103.11289 [math.DG]].


\bibitem{Lucietti:2020ltw}
J.~Lucietti and F.~Tomlinson,
``Moduli space of stationary vacuum black holes from integrability,''
[arXiv:2008.12761 [gr-qc]].

\bibitem{Lucietti:2020phh}
J.~Lucietti and F.~Tomlinson,
``On the nonexistence of a vacuum black lens,''
JHEP \textbf{21} (2020), 005
[arXiv:2012.00381 [gr-qc]].


\bibitem{Mazur:1984wz}
P.~O.~Mazur,
``A Global Identity for Nonlinear $\sigma$ Models,''
Phys. Lett. A \textbf{100} (1984), 341



\bibitem{Minerbe} V.~Minerbe, ``A mass for ALF manifolds'' Comm. Math. Phys. \textbf{289},  (2009) 925--955. 


\bibitem{Minerbe:2009gj}
V.~Minerbe,
``Rigidity for Multi-Taub-NUT metrics,''
J. Reine Angew. Math. 656 (2011), 47A56,
[arXiv:0910.5792 [math.DG]].


\bibitem{Neugebauer:2011qb}
G.~Neugebauer and J.~Hennig,
``Stationary two-black-hole configurations: A non-existence proof,''
J. Geom. Phys. \textbf{62} (2012), 613-630
[arXiv:1105.5830 [gr-qc]].

\bibitem{OR}
Orlik, P. and Raymond, F.: ÒActions of the torus on 4-manifolds I,Ó Transactions of the AMS 152, (1972)

\bibitem{wein}
G. Weinstein, ``On rotating black holes in equilibrium in general relativity", Com-
mun. Pure Appl. Math. 43 (1990) 903-948 

\bibitem{wein2}
G. Weinstein, ``The stationary axisymmetric two-body problem in general relativity", Commun.Pure Appl.Math. 45 (1992) 1183-1203 


\bibitem{Weinstein:2019zrh}
G.~Weinstein,
``Harmonic maps with prescribed singularities and applications in general relativity,''
[arXiv:1902.01576 [gr-qc]].

  \end{thebibliography}
\end{document}